\documentclass[11pt]{article}

\usepackage[left=2cm, right=2cm, bottom=2cm,top=2.5cm]{geometry}
\usepackage{graphicx}
\graphicspath{{../images/},{images/},{../../SIMU/plots/keep/},{../Global_paper/images/}{bioPics/}}

\newif\ifIncludeFastProj %
\newif\iflongversion %

\longversiontrue

\IncludeFastProjtrue

\newif\ifreferonline

\referonlinefalse

\usepackage{amsmath}

\usepackage{subcaption}
\usepackage{bm}

\usepackage{frenchineq}
\usepackage{algorithm,algorithmicx}
\makeatletter
\renewcommand{\ALG@name}{Algo.}
\makeatother
\usepackage{algpseudocode}
\usepackage{cleveref}

\hyphenation{op-tical net-works semi-conduc-tor}

\usepackage[colorinlistoftodos,bordercolor=orange,backgroundcolor=orange!20,linecolor=orange,textsize=scriptsize]{todonotes}

\usepackage{multirow}
\usepackage{pbox}
\usepackage{amsfonts}
\usepackage{amssymb}
\usepackage{amsthm}
\usepackage{float}
\usepackage{adjustbox}
\usepackage{tikz}
\usetikzlibrary{arrows,positioning}
\usepackage{enumitem}
\usepackage{mathrsfs} %
\usepackage{dsfont}
\usepackage[numbers,sort&compress]{natbib}

\renewcommand{\citep}[1]{\citeauthor{#1} (\citeyear{#1})}

\newtheorem{theorem}{Theorem}

\newtheorem{proposition}{Proposition}

\newtheorem{assumption}{Assumption}

\newtheorem{example}{Ex.}

\Crefname{algorithm}{Algo.}{Algos.}
\Crefname{proposition}{Prop.}{Props.}
\Crefname{equation}{Eq.}{Eqs.}
\Crefname{figure}{Fig.}{Figs.}
\Crefname{tabular}{Tab.}{Tabs.}
\Crefname{table}{Tab.}{Tabs.}
\Crefname{theorem}{Thm\@.}{Thms.}
\Crefname{definition}{Def.}{Defs.}
\Crefname{section}{Sec.}{Secs.}
\Crefname{assumption}{Assumption}{Assumptions.}
\Crefname{rem}{Rm.}{Rms.}

\newcommand{\txt}{\textstyle}

\newcommand{\hh}{\hspace{-2pt}}

\newcommand{\norm}[1]{\left\Vert #1\right\Vert}

\renewcommand{\hm}{\hspace{-2pt}}

\newcommand{\hmmm}{\hspace{-8pt}}
\newcommand{\SC}{\mathrm{S\hspace{-1pt}C}}
\newcommand{\rr}{\mathbb{R}} 
\newcommand{\argmin}[1]{\underset{#1}{\mathrm{arg}\hspace{-1.3pt}\min}}
\newcommand{\eqDef}{\vspace{-1pt}\overset{\Delta}{=}}%
\newcommand{\eqd}{\eqDef}%

\newcommand{\tr}{\top}
\newcommand{\G}{\mathcal{G}}
\newcommand{\E}{\mathcal{E}}

\newcommand{\dpart}[2]{\dfrac{\partial #1}{\partial #2}}

\newcommand{\dsone}{\mathds{1}}


\newcommand{\X}{\mathcal{X}}
\newcommand{\xx}{\bm{x}} 

 

\newcommand{\lbxx}{\underline{\xx}}

\newcommand{\ubxx}{\overline{\xx}}
\newcommand{\N}{\mathcal{N}}
\newcommand{\n}{n} 
\newcommand{\m}{m}
\newcommand{\T}{\mathcal{T}}
\renewcommand{\t}{t}

\newcommand{\nt}{_{\n,\t}}

\newcommand{\M}{\mathcal{M}}






\renewcommand{\G}{\mathcal{G}}








\renewcommand{\L}{\mathcal{L}} 

\renewcommand{\t}{t} 
\newcommand{\nti}{_{\n,\t}}
\newcommand{\mti}{_{\m,\t}}



\newcommand{\mc}[1]{\mathcal{#1}}







\newcommand{\oti}{_{0,t}}

\renewcommand{\P}{\mathcal{P}} 

\newcommand{\sol}{\mathrm{sol}}

\newcommand{\uV}{\underline{V}}
\newcommand{\oV}{\overline{V}}
\newcommand{\oP}{\overline{P}}
\newcommand{\uP}{\underline{P}}
\newcommand{\oQ}{\overline{Q}}
\newcommand{\uQ}{\underline{Q}}
\newcommand{\osigma}{\overline{\sigma}}
\newcommand{\usigma}{\underline{\sigma}}

\newcommand{\pRE}{p^\mathrm{p}} 
\newcommand{\qRE}{q^\mathrm{p}} 
\newcommand{\pC}{p^\mathrm{c}}
\newcommand{\qC}{q^\mathrm{c}}
\newcommand{\tauC}{\tau^\mathrm{c}}
\newcommand{\rlbRE}{\underline{\rho^\mathrm{p}}} 
\newcommand{\rubRE}{\overline{\rho^\mathrm{p}}} 

\newcommand{\upRE}{\overline{P}^\mathrm{p}} 

\newcommand{\llam}{\bm{\lambda}}

\newcommand{\pp}{\bm{p}}
\newcommand{\qq}{\bm{q}}

\newcommand{\A}{\mathcal{A}}
\newcommand{\dlmp}{\lambda^{\text{p}}}
\newcommand{\dlmq}{\lambda^{\text{q}}}
\newcommand{\ddlmp}{\bm{\dlmp}}
\newcommand{\ddlmq}{\bm{\dlmq}}

\newcommand{\na}{n_{\text{-}}} 
\newcommand{\nati}{_{\na ,t}} 

\newcommand{\pro}{\mathscr{P}}

\begin{document}
\title{DLMP-based Coordination Procedure for Decentralized Demand Response under Distribution Network Constraints}

\author{Paulin Jacquot
\thanks{P. Jacquot is with  the GERAD research center and Polytechnique Montr\'eal, Montr\'eal, Canada. \texttt{paulin.jacquot@polytechnique.edu}. This work was partially  funded by Mitacs Elevation program.}%
}

\markboth{ }%
{Shell \MakeLowercase{\textit{et al.}}: Bare Demo of IEEEtran.cls for IEEE Journals}

\maketitle

\begin{abstract}
Load aggregators are independent private entities whose goal is to optimize energy consumption flexibilities offered by multiple  residential consumers.
  Although aggregators optimize their decisions in a decentralized way, they are indirectly  linked together if their respective consumers belong to the same distribution grid.
  This is an important  issue  for a distribution system operator (DSO), in charge of the reliability of the distribution network,  it has to ensure that decentralized decisions taken do not violate the grid constraints and do not increase the global system costs.
  From the information point of view,the network state and characteristics are confidential to the DSO, which makes a decentralized solution even more relevant.
  To address this issue, we propose a decentralized coordination mechanism between the DSO and multiple aggregators that computes the optimal demand response profiles while solving the optimal power flow problem. The procedure, based on distribution locational marginal prices (DLMP), preserves the decentralized structure of information and decisions, and lead to a feasible and optimal solution for both the aggregators and the DSO.
  The procedure is analyzed from a mechanism design perspective, and different decentralized methods that could be used to implement this procedure are presented.
\end{abstract}

\textbf{Keywords:} 
Decentralized Systems, Demand Response, Mechanism Design, Load Aggregator, Distribution Locational Marginal Prices, AC Optimal Power Flow.

\section*{Introduction}

\paragraph{Context.}%
{T}he management of electricity consumption flexibilities, or \textit{Demand Response} \cite{siano2014demand}, offered by new usages such as electric vehicles and smart appliances, is considered as  a key component of modern electricity systems.
It will help to increase the share of renewable energy production, reduce carbon emissions and ensure the grid stability and resilience.
In this context, aggregators are new actors of the electricity system, whose role is to \emph{aggregate} a large number of individually negligible consumption flexibilities offered by residential or small consumers, and valuate these  flexibilities on the demand response market \cite{gkatzikis2013role} or as a service offered to the system operator.
In competitive electricity markets as in Europe or in the United States, several Load Aggregators (LAs) can be present on the same distribution network, implying the need for coordination.
There is a hierarchical decisions structure, as explained in \cite{gkatzikis2013role}, from the distribution system operator (DSO) in charge of the network, which interacts with the  present LAs, each LA interacting at the lower level  with a subset of affiliated  end-consumers. 
This decentralized system involves multiple actors that are meant to engage in decentralized decisions. 
However, the decisions of LAs  and of the DSO are all linked by the physical constraints of the underlying network (line capacities, voltage limits, etc): if each LA manages its flexibilities %
 ignoring these constraints, the resulting flows could jeopardize the stability of the network or, from the mathematical point of view,  be infeasible.

In addition to the multiplicity of actors, the information asymmetry is also a key issue in the coordination: the network physical parameters (topology, line resistances and capacities, etc.) are often considered as confidential by the DSO and not revealed to third parties, while, on the other hand, LAs might have privacy considerations regarding the flexibilities provided by consumers.

The objective of the present paper is to provide a decentralized coordination mechanism for LAs, based on the computation of  Distribution Locational Marginal Prices (DLMPs) obtained from a conic relaxation of the Alternative Current Optimal Power Flow (ACOPF) problem.
Considering an  AC model of the network enables to take into account not only capacity constraints but also voltage and angle constraints, which are limiting in practice in distribution networks, as stated above.
The underlying idea of the proposed mechanism is to use DLMPs, centrally computed by the DSO, as price incentives for LAs to manage their flexibilities.  %
Through such a procedure, the decentralized structure as well as the asymmetry and privacy of information are preserved.

\paragraph{{Related Works.}}
Recent works have shown the existence of tight conic relaxations of the ACOPF problem, with some cases of exact relaxations  \emph{exact} in particular for radial networks. %
The works \cite{lavaei2012zero,sojoudi2014exactness} consider a Semi-Definite Programming (SDP) relaxation of ACOPF and show its exactness.
The works   \cite{baradar2013second,farivar2013branch,kocuk2016strong} consider different Second Order Cone Programming (SOCP) relaxations of the OPF problem and show  that the SOCP relaxations can be exact under some additional assumptions.
The authors in   \cite{subhonmesh2012equivalence} showed that, in tree networks, the \emph{branch flow } SOCP relaxation is   exact whenever the SDP model \cite{lavaei2012zero} is exact.
Recently, Zohrizadeh \textit{et al} \cite{zohrizadeh2020survey}  proposed a survey on the different relaxation techniques and associated results.
  As SOCP is computationally simpler than SDP \cite{boyd2004convex,gan2014exact}, we focus in this paper on the SOCP branch flow model of \cite{farivar2013branch}.

  In \cite{gatsis2013decomposition}, the authors propose a decomposition algorithm, based on the \emph{Cutting Plane Methods} for the decentralized coordination of several aggregators on a network. Lagrangian multipliers--associated to the aggregated power demand equality constraint of each aggregator--are used to defined  coordination signals, but these prices are not ``locational'', and power flow constraints are not considered.

  Le Cadre \textit{et al} \cite{lecadre2019game} consider the coordination between TSO and DSO, relying on the branch flow formulation and SOCP relaxation for the OPF problem. They compare different solutions obtained from a centralized optimization, a generalized game and a Stackelberg model.
  
  The idea of using DLMPs to coordinate LAs and electric vehicles charging in a decentralized fashion appears in \cite{li2013distribution}. %
  Scott \textit{et al}  \cite{scott2019network} consider the optimization of distributed energy resources (DERs), under the nonconvex power flow constraints of a multiphase unbalanced network. They rely on the alternating direction method of multipliers (ADMM) to solve a distributed receding-horizon OPF and obtain DLMPs, and show results of real experiments of their method.
  In \cite{mhanna2018component}, authors consider the radial formulation of the OPF for distributed generators. Instead of relying on convex relaxations, they study numerically ADMM and dual decomposition algorithms that consider different nonconvex subproblems for branches and buses of the network.
  
  Huang \textit{et al}  \cite{huangDLMP2015}  propose to use a quadratic model to implement the decentralized procedure using DLMPs, to coordinate LAs of electric vehicles or heat pumps. They extend their model,  considering the possibility of negative prices (subsidies) in \cite{huang2019dynamic}.
The papers \cite{li2013distribution,huangDLMP2015,huang2019dynamic} consider a direct current (DC) power flow model, and DLMPs are the Lagrangian multipliers associated to the capacity   constraint for each time period in the DSO optimization problem.  Although DC power flow models are much easier to solve than ACOPF problems,  a DC model  does not consider all existing constraints in distribution networks such as voltage limits.

 Papavasiliou \cite{papavasiliou2017DLMP} considers two formulations of the ACOPF in distribution networks, based on an implicit function formulation and on the SOCP relaxation \cite{peng2016distributed}, and derives expressions and properties of the  DLMPs in each formulation. The SOCP formulation adopted in this paper, is the most interesting from a computational point of view.

 Liu \textit{et al} \cite{liu2014day} consider solving the original (nonconvex) ACOPF problem for the DSO to obtain DLMPs %
  sent to multiple LAs to compute flexible consumption profiles satisfying consumers constraints.
 However, because of its nonconvexity  \cite{farivar2013branch}, the original ACOPF problem  can be very hard to solve in practice, and only a local optimum may be found. 
As the objective functions considered by LAs in \cite{liu2014day} are linear, one cannot ensure, as noted in \cite{huangDLMP2015}, that the combination of the individual LAs solutions obtained through this decentralized procedure does not necessarily correspond to the optimal solution computed by the DSO.

Lin \textit{et al} \cite{lin2019decentralizedAC} propose a coordination mechanism between transmission and distribution level, considering an AC branch flow \cite{farivar2013branch} model for distribution grids. Their method relies on the exchange of information (boundary variables and lower bounds) between transmission and distribution levels.
 
Bai \textit{et al} \cite{bai2017distribution} propose to solve the ACOPF, considering different types of distributed energy resources (DERs) as well as feeder reconfiguration and on load tap changers, involving discrete decision variables. 
The authors propose to solve in a first step the  problem considering the SOCP relaxation defined in \cite{farivar2013branch} with the discrete variables, to determine the optimal values of these variables. 
In a second step, the discrete variables are fixed and a linearization of the SOCP problem around the optimal value is solved  to obtain DLMPs, which are then transmitted to the DERs.\\
 
The framework considered in this paper differs from the above mentioned works in at least two points.
First, the procedures proposed in the existing literature are not decentralized as, to compute the DLMPs by solving the OPF, the authors consider that the DSO has  access to all the information from the DERs (capacities, state of charge for batteries, power bounds for LAs). 
A solution could be to impose to all the DERs to provide the necessary information to the DSO in time, or that the DSO computes an approximation of these parameters. 
However, another solution analyzed in this paper, is to rely on a \emph{decentralized} iterative procedure in which each DER updates its individual decisions according to a partial information received from the DSO, and then transmit its profile back to the DSO. 
This solution has the advantage to converge to a feasible power flow solution, which may not necessarily be the case if the DSO simply transmits the DLMPS to DERs, as multiple individual profiles can emerge from the linear programs solved by the DERs \cite{huangDLMP2015}.

Second, most of the works  \cite{li2013distribution,liu2014day,huangDLMP2015,bai2017distribution} consider either DCOPF or linearizations of ACOPF. In this paper we obtain the DLMPs directly from the resolution of the branch flow SOCP relaxation of the OPF problem \cite{farivar2013branch}.

 \paragraph{{Main Contributions. }}
The main  contributions brought by this paper are the following:
\begin{itemize}[wide]
\item we  present a model of the interactions of a DSO and multiple LAs subject to the distribution network constraints, highlighting the need for a coordination mechanism between those actors (\Cref{sec:modelAggsNetwork});
\item  we review the existing results on the exactness of the SOCP relaxation for the OPF problem based on the branch flow model, and we consider their extensions to multi-time periods framework (\Cref{thm:farivarSOCP}, \Cref{thm:ganSOCPexactV});
\item  we bring more insight and give additional results on the DLMPs obtained from the optimal solution of the SOCP relaxation  based on the branch flow model (\Cref{prop:DLMPnoRes}, \Cref{prop:dlmpsSubgrad}). In particular, \Cref{prop:DLMPnoRes} shows the link between the DLMPs at one node and at the ancestor node.
Despite of the basic nature of these results, we are not aware of other works where they are stated for this framework;
\item  we provide a DLMPs-based decentralized coordination procedure (\Cref{proc:coordination}) between the DSO and the LAs that respects the decentralized decision and information structure: flexibilities are computed and managed by LAs from DLMPs incentives imposed by the DSO, and the final  obtained profiles result in an optimal solution of the ACOPF problem that can be computed by the DSO. To our limited knowledge, this is the first paper to consider decentralized methods to solve the ACOPF branch flow model considering demand response;
\item we consider the application of standard decomposition methods as Dual Ascent (\Cref{algo:dualAscent}) and ADMM (\Cref{algo:ADMM}) and show how they can be used within the proposed  coordination procedure to respect the above mentioned decentralized features.
  We  propose a novel decomposition method (PDGS, \Cref{algo:dualDecompAltMin}) that has the practical advantage to ensure primal feasibility at each iteration, and we  prove its convergence (\Cref{thm:GScvg});
\item we analyze the proposed procedure based on DLMPs from a mechanism design perspective. We show, through a counter-example, that the DLMPs-based mechanism is not incentive compatible in general (\Cref{ex:counterexampleIncentive}). To our limited knowledge, this is the first time where this result is stated for this framework.
  We also compare the DLMPs-based mechanism to the standard VCG mechanism, a mechanism which satisfies the incentive-compatible property, and explain how VCG could be implemented in a decentralized way;
  
\item last, we provide a numerical illustration of the proposed framework, based on the extension of the 15 nodes network example considered in \cite{papavasiliou2017DLMP} to a multi-time periods and flexible consumption framework.
  We compare numerically the application of decentralized algorithms such as ADMM and PDGS, and compare the payments obtained under the DLMPs-based mechanism and VCG mechanism.
  \end{itemize}

  \paragraph{Structure.} The remaining of the paper is organized as follows. In \Cref{sec:modelAggsNetwork}, we detail the model of the network constraints, load flexibility aggregator and objectives.
  In  \Cref{sec:exactnessSOCP}, we state two theoretical results ensuring the exactness of the SOCP relaxation for the OPF problem in multi-time periods.
  In  \Cref{sec:DLMPs}, we analyze the DLMPs obtained as optimal dual variables of the SOCP relaxation, and state different properties on these values.
  In \Cref{sec:coordMethods}, we present different distributed algorithms that can be used to implement the proposed DLMPs-based decentralized coordination procedure.
  In \Cref{sec:mechanisms}, we present the proposed procedure from a mechanism design perspective, and we give a counter-example showing that this procedure is not incentive compatible. We present how the VCG mechanism could be used in a decentralized way.
Last,  \Cref{sec:numericalStud} is devoted to numerical illustrations of the proposed procedure and using two different numerical methods (ADMM and PDGS). We consider a 15-nodes network that was already considered in the literature, and consider two time periods.
 \medskip

\textsc{Notation.}  We use \textbf{bold} font to denote  vectors (e.g. $\xx$) as opposed to  scalars (e.g. $x$).
Sets are denoted by calligraphic letters (e.g. $\T, \N, \A$), except $\pro$ which is used to label optimization problems.
For a problem $\pro$, $\mathrm{sol}(\pro)$ denotes the set of optimal solutions of $\pro$.

\section{Aggregators on Distribution Network}

\label{sec:modelAggsNetwork}

We consider a distribution network represented by a set of nodes $\N \eqd \{1,\dots,N\}$ and given as a graph $\G \eqd (\N_+,\E)$, where $\N_+ \eqd \N \cup \{0\}$ with $0$ denoting the root node and corresponding to the feeder node (link with transportation network). 

Distribution networks are usually designed such that there is no cycle in the electricity lines. Thus, the graph $\G$ is a tree.

Each node $n\in\N$ corresponds to an individual household, a group of households or a commercial building linked to the distribution network.  
We assume that the operation of the distribution network and the management of flexibilities  is done on a common time horizon $\T$  (e.g. a day), given as a finite set of discrete time periods: \begin{equation*}
\T \eqd \{1, \dots, T\} .
\end{equation*}

\subsection{Branch Flow Model and SOCP relaxation}

We use the branch flow model and the SOCP relaxation introduced in \cite{peng2016distributed} and also considered in \cite{papavasiliou2017DLMP}.

Following \cite{papavasiliou2017DLMP}, we use $p_n$, $q_n$ to denote active and reactive power \emph{consumption} at node $n$: thus $p_n <0$ means that there is production at bus $n$. At the root (feeder) node $n=0$, we expect that some power will be produced (or  bought from the market) and that there is no consumption:  we therefore assume  $p\oti\leq 0$.
Variable $v_n$ stands for  the squared voltage magnitude at bus $n$.
 Variables  $f_n$, $g_n$ and $\ell_n$  denote  the active and reactive power flows and  the squared current magnitude on the line from $n$ to the unique ancestor of node $n$, denoted by $\na$.
\begin{figure}[!ht]
\centering
\begin{tikzpicture}
\node[ draw,circle] (n0) at (-4,0) {$0$};
\node[ draw,circle] (na) at (-2,0) {$\na$};
\node[draw] (r) at (0,0) {$R_n,X_n$};
\node[ draw,circle] (n) at (2,0) {$n$};
\draw [thick] (na) -- (r) -- node [above,xshift=0cm] {$\underset{\leftarrow}{f_n,g_n}$ }   node[below] {} (n) ; 

\draw [dashed] (n0) --(-3.2,0) ;
\draw [dashed] (na) --(-2.8,0) ;
\draw [dashed] (n) --(3.7,1) ;
\draw [dashed] (n) --(3.7,-1) ;
\draw [-latex'] (n) -- node [above,rotate=-90] {$ \ \ p_n,q_n$} (2,-1);

\draw [thick] (-2,-2)-- (2,-2) ;
\node[draw,rotate=-90] (gb) at (-1.2,-1) {$  G_n,B_n$} ;
\draw [thick] (-1.2,0) -- (gb) --  (-1.2,-2);
\end{tikzpicture}
\caption{$\Pi$-model and notations for line from node $n$  to $\na$}
\label{fig:pimodelNotations}
\end{figure}
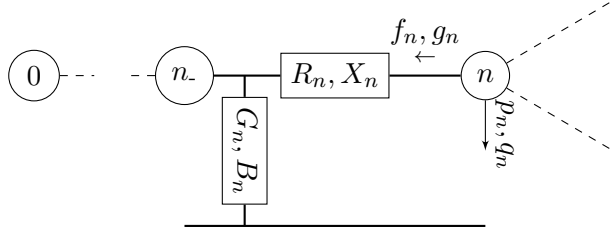
The resistance and reactance on this line are denoted $R_n$ and $X_n$, while the shunt conductance and susceptance at node $n$ are denoted $G_n$ and $B_n$. The power flow magnitude limit (line capacity) on line $(n,\na)$ is denoted by $S_n$. \Cref{fig:pimodelNotations} recalls the notation used to describe the network parameters.

We obtain the set of \emph{branch flow} equations:
\begin{small}
\begin{subequations}
\label{cons:OPF}
\begin{align}
& v\nti \hh - \hh 2(R_nf\nti \hh + \hh X_n g\nti ) + \ell\nti (R_n^2 \hh + \hh   X_n^2) = v_{\na,t}, \   \forall n \in \N  \label{eq:DSO-voltageLines}\\ 
 & f\nti \hm -\hmmm \sum_{m: m_{-}=n}\hmmm \big(f\mti \hh- \hh \ell\mti R\mti \big)\hh +\hh p\nti  \hh +G_n v\nti= 0 , \forall n \in \N_+   \label{eq:DSO-actPowBal}\\
&  g\nti \hh - \hmmm \sum_{m:m_{-}=n} \hmmm \big(g\mti  \hh-  \hh \ell\mti X_m\big)  + q\nti \hh - \hh B_nv\nti=0 ,\  \forall n \in \N_+  \label{eq:DSO-reactPowBal}\\
  & f\nti^2 + g\nti^2 \leq v\nti \ell\nti \ , \ \forall n \in \N  \label{eq:DSO-currentmagn}\\
  & f\nti^2 + g\nti^2 \leq S_n^2 \ , \ \forall n \in \N   \label{eq:DSO-flowCons} \\
  & (f\nti-R_n\ell\nti)^2 + (g\nti-X_n\ell\nti)^2 \leq S_n^2 \ , \ \forall n \in \N   \label{eq:DSO-flowCons2} \\
 & \uV_n \leq v\nti \leq \oV_n \ , \ \forall n \in \N_+     \label{eq:DSO-voltageLim} 
\end{align}  
\end{subequations}
\end{small}
and we further denote by $ \beta\nti,  \dlmp\nti  , \dlmq\nti   \in \rr $ the Lagrangian multipliers associated respectively to \eqref{eq:DSO-voltageLines}, \eqref{eq:DSO-actPowBal}, \eqref{eq:DSO-reactPowBal}, and by $\gamma\nti ,\eta\nti^+,\eta\nti^-, \usigma\nti,\osigma\nti \geq 0 $ the ones associated to constraints \eqref{eq:DSO-currentmagn}, \eqref{eq:DSO-flowCons}, \eqref{eq:DSO-flowCons2}, \eqref{eq:DSO-voltageLim}.
In \eqref{cons:OPF}, the equality $ f\nti^2 + g\nti^2 = v\nti \ell\nti  $ defining the current magnitude $\ell\nt$ in the actual branch flow model, is \emph{relaxed} as the inequality \eqref{eq:DSO-currentmagn}, which is a cone constraint (see \cite{farivar2013branch} for more details), defining what we refer to as the SOCP relaxation for the ACOPF problem.

\subsection{Electricity Load  Aggregators}

We consider that a set $\A$ of several \emph{Load Aggregators} (LAs) coexist on the distribution network.  
Each LA $a \in \A$ manages a subset $\N_a$ of the nodes in the network, such that $\bigcup_a \N_a$ forms a partition  of $\N$ (each node is affiliated with one LA).

The active and reactive power at each node is not fixed but \emph{flexible}: the LA $a\in \A$  manages the \emph{flexible} net power consumption  $(p\nt, q\nt)_{t\in\T} $ of individual consumers at $n$ for each node $\n \in \N_a$, w.r.t. individual constraints. 
We consider the possible presence  of local renewable energy sources (e.g. photovoltaic panels) that can also be managed by the LA, such that $p\nt$ is composed of a production part $\pRE\nt \geq 0$ and a consumption part $\pC\nt \geq 0$, as:
\begin{subequations}
\label{cons:localCons}
\begin{equation} \label{eq:prod-cons-defp}
\forall n \in \N, \forall t \in \T,\  p\nt= \pC\nt-\pRE\nt \ .
\end{equation}
 The  constraints on the consumption of each node $n$ are described through a global energy demand $E_n$ over the $T$ time periods, as well as lower and upper bounds for each time period, that is:
\begin{align} 
& \txt\sum_{\t\in\T} \pC\nt \geq E_n \label{cons:totalDemand}\\
 & \uP\nti \leq \pC\nti \leq \oP\nti \ , \label{cons:boundsConsum}
\end{align}
and we denote by $\alpha_n\geq 0 $ (resp. $\underline{\nu}\nti, \ \overline{\nu}\nti$) the Lagrangian multipliers associated to \eqref{cons:totalDemand} (resp. \eqref{cons:boundsConsum}.
Constraints \eqref{cons:totalDemand}-\eqref{cons:boundsConsum} give a simple model for deferrable loads such as electric vehicles and water heaters, which has been widely used  \cite{PaulinTSG17,mohsenian2010autonomous,li2011optimal,chen2014autonomous,baharlouei2014efficiency,samadi2012advanced}.

 In practice, each LA can aggregate the flexibilities offered by individual end-consumers to fit in the model  \eqref{cons:totalDemand}-\eqref{cons:boundsConsum}. 
 The same idea of aggregate set of constraints in a particular form is formulated for instance in  \cite{muller2017aggregation}, where the authors propose an aggregation procedure considering \emph{zonotopic sets} instead of the simplex structure   \eqref{cons:totalDemand}-\eqref{cons:boundsConsum}.
 
We consider that active and reactive consumptions  are correlated by a given ratio (depending on the type of appliances) as:
 \begin{align} \label{cons:relationActiveReactiveCons}
  & \qC\nt= \tauC_n \  \pC\nt  \ .
 \end{align}
Each LA also controls the power $(\pRE\nt)_{n\in\N_a,t}$ produced by distributed energy resources (DERs) across its affiliated nodes $\N_a$. 
The DERs \cite{bai2017distribution} we consider are local and renewable energy (either photovoltaic or wind power) installed in households. 
We consider that the power $\pRE\nt$ produced at period $t$ is upper bounded by an intermittent available power $P\nt$ (which depends on the DERs capacities and the wind or sun power), and can be adjusted within $[0,P\nt]$. 
The associated produced reactive power $\qRE\nt$ can be adjusted through the control of smart inverters  \cite{farivar2012optimal}, which gives:
\begin{align}
& 0 \leq \pRE\nt \leq  \upRE  \label{cons:boundsProd} \\
& \rlbRE\nt \pRE\nt \leq \qRE\nt \leq  \rubRE\nt \pRE\nt \label{cons:boundsReactiveProd}  \ .
\end{align}
\end{subequations}
We consider that DERs are used at cost zero by the LA operating them: incentives and costs can be distributed by the LA to its affiliated households in a separate process. The study of interactions between an LA and its affiliated consumers is beyond the scope of this paper.

We define the  cost function of  LA $a$ as $\phi_a(\cdot)$ and assume that it depends on the active consumption profiles $ (p_n)_{n\in\N_a} $.
 The cost function $\phi_a$ is determined by exogenous parameters (e.g. from   day-ahead electricity market prices) %
or by some incentives from the system operator.
Thus, the \emph{local} problem $\mathcal{P}_a$ of LA $a$ is formulated as:
\begin{align} \label{pb:operator_local}     \tag{$\pro_a$}
&     \min_{\xx_a=(\pp_n,\qq_n)_{n\in\N_a} } \phi_a\big( (\pp_n)_{n\in\N_a} \big) \\
&    \text{s.t. } \  \eqref{cons:localCons}  \nonumber \,
  \end{align}
  where $\xx_a \eqd (\pp_n,\qq_n)_{n\in\N_a}$ denote the variables of LA $a\in\A$ that are controlled locally by $a$, and $\pp_n\eqd (p\nt)_{t\in\T},\ \qq_n \eqd(q\nt)_{t\in\T}$.

\subsection{Distribution System Operator}

The Distribution System Operator (DSO) is a central, independent entity in charge of the operation of the distribution network, which is able to impose (price) incentives to LAs.

In most of the electric systems, the DSO also bears the costs of the network losses, by buying the necessary energy at the day-ahead market price. The corresponding fees are usually recovered by taxes to utilities collected by the DSO. %

We assume the existence of underlying energy price functions  $\big(c_t(.)\big)_{t\in\T}$  modeling, for instance, an electricity market or some production costs. 
We consider different possible objective functions  for the DSO:
\begin{enumerate}[label=\roman*),wide]
\item minimization of  the \emph{social cost} of LAs (in this case, the DSO has no proper cost function):
\begin{subequations} \label{eq:possibleObj}
\begin{equation}\label{eq:defSC}
 \SC(\pp) \eqDef  \sum_{a\in\A} \phi_a\big( (p\nt)_{nt}\big)= \sum_{t\in\T} c_t\big( \txt\sum_{n\in\N}   p\nt \big) \ ;
\end{equation}
\item minimization of the total active power injected at the substation node 0 into the  grid (e.g.  bought at the DA market price):
\begin{equation}\label{eq:defcostExtpower}
\phi_{\text{inj}}(\pp_0) \eqDef \sum_{t\in\T}c_t( \pRE_{0t})  =  \sum_{t\in\T}  c_t(-p_{0t} )  \ ,
\end{equation}
where, following  \cite{huangDLMP2015} and others, we assume that for each time period $t$, there is an energy cost function which is  affine and  increasing, i.e.,  $c_t(x) \eqd \alpha_t x + \beta_t x^2$ for some $ \alpha_t, \beta_t \geq 0$.
\item minimization of active network losses:
\begin{equation}\label{eq:defcostExtpowerLoss}
\phi_\mathrm{loss}(\bm{\ell}) \eqDef \sum_{t\in\T}  \sum_{n\in\N}  R_n \ell\nti \ .
\end{equation} 
\end{subequations}
\end{enumerate}
From a multi-agent point of view, the objectives concern different entities:  the cost $\phi_a$ in \eqref{eq:defSC} concerns the LA $a$, while the costs \eqref{eq:defcostExtpower} and \eqref{eq:defcostExtpower} make more sense at the level of the DSO.
It is thus relevant to differentiate both cases by the following notation:
\begin{itemize}
\item $\xx_\A \eqd(\xx_a)_{a\in\A} $ denotes the LAs variables as defined above, while $\phi_\A(\xx_\A) \eqd \sum_a \phi_a(\xx_a)$ denotes the LAs part in the cost function;
  \item $\xx_0  \eqd (\bm{p}_0,\bm{q_0},\bm{v},\bm{\ell},\bm{f},\bm{g}) $ denotes the DSO variables, while $\phi_0(\xx_0) \eqd \phi_{\text{inj}}(\pp_0) +  \alpha_{\mathrm{loss}}\phi_\mathrm{loss}(\bm{\ell}) $ denotes the DSO part in the cost function.
\end{itemize}
In what follows, we will assume that the DSO objective function $\Phi$  will be either $\phi_\A$ (as the minimization of social cost) or $\phi_0$, or  a linear combination of the three objectives \eqref{eq:possibleObj}:

\begin{equation*} 
\Phi(\xx) \eqd  \phi_0(\xx_0) +  \phi_\A(\xx_\A) \ .
  \end{equation*}
  
  In the decentralized framework adopted of this paper, the DSO  considers the local variables $\xx_\A \eqd (p_n,q_n)_{n\in\N}$ as fixed parameters, as those variables are managed by the LAs. Moreover, the DSO has no access to the cost function $\phi_a$ of each LA $a$ and does not consider these costs in its optimization.
  Thus, the DSO faces the following optimization problem:
\begin{equation} \label{pb:DSO-OPF} \tag{$\pro_0(\xx_A)$}
\begin{split} 
  & \min_{\xx_0= (\bm{p}_0,\bm{q_0},\bm{v},\bm{\ell},\bm{f},\bm{g})} \phi_0(\xx_0) \\ & \text{s.t. } \  \eqref{cons:OPF} \ .
\end{split}
\end{equation}
Despite he does not control  the local  variables $\xx_\A$, the DSO is interested in finding the solution that is socially optimal for the whole system, that is, obtaining an optimal solution of the global \emph{centralized} problem:
\begin{align} \label{pb:OPFc} \tag{$\pro^{\star}$}
 &   \min_{\xx_0,\xx_\A} \Phi(\xx) \\
  &\text{s.t.} \ (\ref{cons:OPF} - \ref{cons:localCons}) \ . \nonumber
\end{align}
From now on, we assume that  problem  \eqref{pb:OPFc} has a solution:
\begin{assumption}
  \label{assp:exiSsolution}
  There exists $(\xx_0, \xx_A)$ satisfying (\ref{cons:OPF} - \ref{cons:localCons}) or, equivalently, problem \eqref{pb:OPFc} is feasible.
\end{assumption}
In what follows, we recall that the SOCP relaxation \eqref{pb:OPFc} can be exact: a solution of this problem will, in many cases, give an optimal solution of  the original OPF problem (i.e. the same problem \eqref{pb:OPFc} with constraint \eqref{eq:DSO-currentmagn} written as an equality).

\section{Exactness of SOCP relaxation}
\label{sec:exactnessSOCP}
In \cite{farivar2013branch,gan2014exact}, the authors show that, \emph{in a radial network}, the SOCP relaxation \ref{pb:OPFc} of the nonlinear OPF problem 
is exact under some specific assumptions.
The first result stated below is a straightforward extension of \cite[Thm.~1]{farivar2013branch} to the multi-time periods OPF problem \eqref{pb:OPFc}.

\begin{theorem} \label{thm:farivarSOCP}
  Suppose that the objective function $\Phi$ is convex, strictly increasing in $\ell$, independent of $f,g$, plus one of the following:
  \begin{itemize}
  \item $\Phi$ is \emph{nonincreasing} in  $\pC,\qC>0$, and \emph{upper} bounds \eqref{cons:boundsConsum},\eqref{cons:relationActiveReactiveCons} on $\pC, \qC$ are not binding (that is, $\oP\nti=\infty, \oQ\nti=\infty$);
 \item    $\Phi$ is \emph{nondecreasing} in $\pRE,\qRE >0$, and \emph{lower} bounds \eqref{cons:boundsProd}, \eqref{cons:boundsReactiveProd} on $\pRE,\qRE$ are not binding;
    \end{itemize}
then the  SOCP relaxation \eqref{pb:OPFc} given above is exact.
\end{theorem}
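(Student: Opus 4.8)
The plan is to argue by contradiction, mirroring the single-period argument of \cite[Thm.~1]{farivar2013branch} and then handling the inter-temporal coupling that is new to \eqref{pb:OPFc}. I assume \eqref{pb:OPFc} has an optimal solution $\xx^\star$ at which the cone constraint \eqref{eq:DSO-currentmagn} is \emph{slack} at some node $k\in\N$ and period $\t$, i.e. $f_{k,\t}^2+g_{k,\t}^2 < v_{k,\t}\,\ell_{k,\t}$, and from it I build a feasible point $\xx'$ with strictly smaller cost, contradicting optimality. The first observation is that the branch-flow constraints \eqref{eq:DSO-voltageLines}--\eqref{eq:DSO-voltageLim} share no variables across distinct periods: the only inter-temporal links are the lower bound \eqref{cons:totalDemand} and the separable objective. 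This is precisely what turns the single-period result into the multi-period statement, so I fix $\t$ and perturb only the period-$\t$ variables.

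The core construction is to decrease $\ell_{k,\t}$ by a small $\delta>0$. This lowers the active and reactive losses on line $(k,\na)$ by $R_k\delta$ and $X_k\delta$, and perturbs the voltage drop \eqref{eq:DSO-voltageLines} on that line. Since $\Phi$ is independent of $f,g$, I would restore \eqref{eq:DSO-voltageLines} \emph{at fixed voltages} by moving the flows along the admissible one-parameter direction $R_k\,\mathrm{d}f + X_k\,\mathrm{d}g = \tfrac12(R_k^2+X_k^2)\,\delta$, a change that is free in the objective. The power balances \eqref{eq:DSO-actPowBal}--\eqref{eq:DSO-reactPowBal} at $k$ and at its ancestor $\na$ then show that a quantity $R_k\delta>0$ of active power (with an accompanying reactive amount) is \emph{freed}, which I reallocate through the injection variables. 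Under the first alternative I increase the consumption $\pC$ (hence $\qC=\tauC\pC$) at the freed bus: the non-binding upper bounds \eqref{cons:boundsConsum},\eqref{cons:relationActiveReactiveCons} leave room, and because $\Phi$ is nonincreasing in $\pC,\qC$ this reallocation does not raise the cost. Under the second alternative I instead \emph{reduce} the production $\pRE,\qRE$ (equivalently the root injection $-p_{0,\t}$): the non-binding lower bounds \eqref{cons:boundsProd},\eqref{cons:boundsReactiveProd} leave room, and since $\Phi$ is nondecreasing in $\pRE,\qRE$ this again does not raise the cost.

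In both cases the strict improvement comes solely from the loss term: as $\Phi$ is \emph{strictly} increasing in $\ell$, lowering $\ell_{k,\t}$ strictly decreases $\Phi$, while the compensating reallocation is cost-non-increasing by the monotonicity above, so $\Phi(\xx')<\Phi(\xx^\star)$. I would finish by checking that $\xx'$ is feasible for all remaining constraints: the slackness of \eqref{eq:DSO-currentmagn} at $(k,\t)$ guarantees it survives for small $\delta$ even after the induced move in $f_{k,\t},g_{k,\t}$, and for $\delta$ small enough the flow limits \eqref{eq:DSO-flowCons},\eqref{eq:DSO-flowCons2}, the voltage limits \eqref{eq:DSO-voltageLim}, and the untouched balances remain satisfied because, once voltages are held fixed, the perturbation is localized to line $(k,\na)$ and its two incident buses. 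Regarding the multi-period coupling, increasing $\pC$ at period $\t$ only helps the lower bound \eqref{cons:totalDemand}, so it is harmless; this is what makes the per-period reduction legitimate.

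The main obstacle is this feasibility bookkeeping, and in particular the admissibility of the reallocation in the presence of the active--reactive coupling \eqref{cons:relationActiveReactiveCons}: absorbing the freed \emph{active} power into $\pC$ forces a proportional change in $\qC$, while the freed \emph{reactive} power is $X_k\delta$, so the residual reactive mismatch must be taken up by the flow degree of freedom $g$ and by $\qRE$ without driving any uncontrolled injection against a binding bound. Showing that this can always be done is exactly where the two bound hypotheses enter: taking the relevant bounds in \eqref{cons:boundsConsum} (first case) or \eqref{cons:boundsProd}--\eqref{cons:boundsReactiveProd} (second case) to be inactive is what guarantees a feasible reallocation direction. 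Once feasibility is secured, the strict cost decrease contradicts optimality, so \eqref{eq:DSO-currentmagn} must be tight at every $(n,\t)$ and the relaxation \eqref{pb:OPFc} is exact.
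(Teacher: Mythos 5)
Your overall strategy coincides with the paper's intended argument: the paper in fact gives \emph{no} proof of \Cref{thm:farivarSOCP}, stating it as a straightforward extension of \cite[Thm.~1]{farivar2013branch}, and what you reconstruct is exactly that Farivar--Low perturbation argument. Your multi-period observation --- that \eqref{eq:DSO-voltageLines}--\eqref{eq:DSO-voltageLim} are separable across $t$ and that the only temporal coupling \eqref{cons:totalDemand} is a lower bound, which an increase of $\pC_{n,t}$ can only help --- is precisely the content of the word ``straightforward'' in the paper. Two repairs to your write-up: the flow-restoration equation should read $R_k\,\Delta f_k+X_k\,\Delta g_k=-\tfrac12(R_k^2+X_k^2)\delta$ (sign), and the ``feasibility bookkeeping'' you postpone can be made explicit by choosing $\Delta f_k=-R_k\delta/2$, $\Delta g_k=-X_k\delta/2$: the power balances \eqref{eq:DSO-actPowBal}--\eqref{eq:DSO-reactPowBal} then force net-consumption increments equal to $R_k\delta/2$ (active) and $X_k\delta/2$ (reactive) at \emph{both} endpoints $k$ and $\na$, all nonnegative, so no lower bound \eqref{cons:totalDemand}, \eqref{cons:boundsConsum} can be hurt, voltages are untouched, and the non-binding hypotheses of the theorem (either $\oP=\oQ=\infty$ with the coupling \eqref{cons:relationActiveReactiveCons} relaxed, or slack production lower bounds \eqref{cons:boundsProd}--\eqref{cons:boundsReactiveProd}) provide exactly the room needed to realize these increments.

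The genuine gap is your claim that the line limits \eqref{eq:DSO-flowCons}, \eqref{eq:DSO-flowCons2} ``remain satisfied for $\delta$ small enough.'' Smallness of $\delta$ only protects \emph{slack} constraints; \eqref{eq:DSO-flowCons} may be binding at line $k$ while the cone constraint \eqref{eq:DSO-currentmagn} is slack, and then your perturbation violates it no matter how small $\delta$ is. Concretely, keeping the consumption increments nonnegative at both endpoints forces $\Delta f_k\in[-R_k\delta,0]$ and $\Delta g_k\in[-X_k\delta,0]$, and the first-order change of $f_{k,t}^2+g_{k,t}^2$ is $2\(f_{k,t}\Delta f_k+g_{k,t}\Delta g_k\)$, which is strictly positive whenever $f_{k,t},g_{k,t}<0$, i.e.\ whenever power flows from the root towards node $k$ --- the typical congestion pattern in a distribution feeder. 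One can check (e.g.\ with $R_k=X_k$) that no admissible choice of $(\Delta f_k,\Delta g_k)$ on the restoration line avoids this increase, so a saturated line carrying downstream flow with a slack cone constraint cannot be handled by this construction. This is not an artifact of your write-up: the model of \cite{farivar2013branch} underlying Theorem~1 has no line capacity constraints, whereas \eqref{cons:OPF} includes \eqref{eq:DSO-flowCons}--\eqref{eq:DSO-flowCons2}. A complete proof of the statement as written must therefore either add the hypothesis that the capacity constraints are not binding at the optimum (paralleling the non-binding assumptions already made on the load and production bounds), or treat the binding-capacity case by a separate argument; it cannot be absorbed by ``choosing $\delta$ small.''
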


The authors in  \cite{gan2014exact} also consider hypotheses on the voltage magnitude constraints that should not be binding on a strong sense.
\Cref{thm:ganSOCPexactV} below is an immediate extension of \cite[Thm.1]{gan2014exact} to multi-time periods.
\begin{theorem} \label{thm:ganSOCPexactV}
    Let $\P_n=\{n , \na, \dots ,0 \} \subset \N_+$ denotes the unique path from $n$ to root node $0$.
  If $\Phi(.)$ is strictly increasing in $p_0$ and if:
  \begin{itemize}[wide]
    \item there is no shunt capacitances and admittances ($\forall n \in \N, \ B_n=G_n=0$);
    \item    for any optimal  $\xx=(\bm{p}, \bm{q}, \bm{f}, \bm{g}, \bm{v}, \bm{\ell}) \in \sol ($\ref{pb:OPFc}$)$ ,   the \emph{linearized} flow solutions (considering $\ell\nti=0$ in \eqref{cons:OPF}):
    \begin{align*} \textstyle
      \hat{f}_n(\pp_t) \eqd  - \sum_{m:n\in \P_m} p\mti,\quad \hat{g}_n(\bm{q}_t) \eqd  - \sum_{m:n\in \P_m} q\mti \ , \\
\ \hat{v}\nti(\xx) \eqd V_0 + 2 \txt\sum_{m \in \P_n} R_m\hat{f}_m(\pp_t) +  X_m \hat{g}_m(\bm{q}_t )
  \end{align*}
verifies: \hspace{1cm}
$ \hat{v}\nti(\xx)     < \oV_n  $,
\hfill 

    \item for any $t\in\T$, any path $(n_1,\dots,n_l)$ to a leaf bus $l\in\N$, and any $(1 \leq s \leq k \leq l)$, we have:
$$ \underline{A}_{n_s,t} \dots   \underline{A}_{n_{k-1},t} u_{n_k} > 0  \text{ with } u_{n} \hh \eqd  \hh \begin{pmatrix} R_n \\ X_n
\end{pmatrix} $$
$$ \text{ and } \underline{A}_{n,t} \hh \eqd  \hh I-\frac{2}{\uV_n} \hh  \begin{pmatrix} R_n \\ X_n 
\end{pmatrix} \hh \begin{pmatrix}
 [ \hat{f}_n(\bm{\uP}_t)]^+ \hh & [ \hat{g}_n(\bm{\uQ}_t)]^+ %
\end{pmatrix}  $$
\end{itemize} 
then the  SOCP relaxation \eqref{pb:OPFc} given above is exact.
\end{theorem}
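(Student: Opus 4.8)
The plan is to exploit the separability of problem \eqref{pb:OPFc} across time periods, reducing the claim to the single-period statement \cite[Thm.~1]{gan2014exact}. The key structural observation is that every branch flow constraint in \eqref{cons:OPF} is written for a fixed period $t$ and couples only the time-$t$ variables; no constraint links a variable at time $t$ to one at time $t'\neq t$. Moreover the DSO objective is additively separable in time, since $\phi_{\text{inj}}(\pp_0)=\sum_t c_t(-p_{0t})$ and $\phi_\mathrm{loss}(\bm{\ell})=\sum_t\sum_n R_n\ell\nt$ are both sums of per-period terms. The only inter-temporal coupling in \eqref{pb:OPFc} comes from the local constraints \eqref{cons:localCons}, and among those only the energy budget \eqref{cons:totalDemand} mixes periods --- and it constrains solely the LA-controlled loads $\pC\nt$, not the network variables $\bm{v},\bm{\ell},\bm{f},\bm{g}$.

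First I would fix an optimal solution $\xx^\star=(\bm{p},\bm{q},\bm{f},\bm{g},\bm{v},\bm{\ell})\in\sol(\pro^\star)$ and freeze the load values $(p\nt^\star,q\nt^\star)_{n\in\N}$ at their optimal level. On the set of feasible points whose loads equal these frozen values, the term $\phi_\A$ and the budget \eqref{cons:totalDemand} are constant, so minimizing $\Phi$ over the remaining (network) variables decomposes into $T$ independent single-period OPF problems, each of the form treated in \cite{gan2014exact}. Since $\xx^\star$ is still optimal inside this restricted feasible set, and both the set and the objective are now separable across $t$, the time-$t$ slice $(v\nt^\star,\ell\nt^\star,f\nt^\star,g\nt^\star,p_{0t}^\star,q_{0t}^\star)_n$ must itself solve the $t$-th subproblem; otherwise one could strictly improve that slice while leaving the other periods and the frozen loads untouched, contradicting optimality of $\xx^\star$.

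It then remains to check that the hypotheses of \Cref{thm:ganSOCPexactV} are exactly the per-period hypotheses required by \cite[Thm.~1]{gan2014exact} on each subproblem. Because $\phi_0$ is separable, strict monotonicity of $\Phi$ in the vector $p_0$ amounts to strict monotonicity of each slice in its own $p_{0t}$; the no-shunt condition $B_n=G_n=0$ is time-independent; and both the linearized-voltage condition on $\hat v\nt(\xx)$ and the matrix condition on $\underline{A}_{n,t}$ are already quantified ``for any $t\in\T$''. Applying \cite[Thm.~1]{gan2014exact} to each of the $T$ subproblems then yields $f\nt^2+g\nt^2=v\nt\ell\nt$ for every $n,t$, i.e. the relaxed constraint \eqref{eq:DSO-currentmagn} is tight everywhere, which is precisely the exactness of \eqref{pb:OPFc}.

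The step I expect to be the crux is the transfer-of-optimality argument of the second paragraph: one must verify that freezing the loads does not silently drop the inter-temporal budget \eqref{cons:totalDemand}. This is harmless because that budget constrains only the loads, which are held at their optimal --- hence feasible --- values, so it is automatically satisfied in the restricted problem and is irrelevant to the per-period subproblems. In this sense the only content beyond \cite{gan2014exact} is the remark that, once the loads are fixed, the coupling constraint vanishes and the argument collapses, period by period, onto the known single-period result.
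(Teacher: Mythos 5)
Your proposal is correct and is essentially the paper's own argument: the paper gives no detailed proof of \Cref{thm:ganSOCPexactV}, asserting it as an ``immediate extension'' of \cite[Thm.~1]{gan2014exact}, and your load-freezing/per-period decomposition (exploiting that only the budget \eqref{cons:totalDemand} couples periods, that it constrains loads only, and that the network constraints and $\phi_0$ are separable in $t$) is precisely the justification of that immediacy. The one point worth making explicit is why the single-period theorem applies to the frozen subproblems, whose injection regions are singletons $\{\xx_a^\star\}$ rather than the original boxes: this works because the hypotheses of \cite[Thm.~1]{gan2014exact} are stated relative to the injection \emph{bounds} (here $\bm{\uP}_t,\bm{\uQ}_t$) and hence remain valid for any injection region contained in those bounds, the singleton of frozen optimal loads in particular.
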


It has been verified in \cite{gan2014exact} that the conditions of \Cref{thm:ganSOCPexactV} are verified in many standard networks.
More importantly, the conditions of \Cref{thm:ganSOCPexactV,thm:farivarSOCP} are \emph{sufficient} conditions, but are not necessary: as shown in the example of \Cref{sec:numericalStud}, but also in \cite{gan2014exact}, in general the SOCP relaxation \eqref{pb:OPFc} is exact even if conditions of \Cref{thm:ganSOCPexactV,thm:farivarSOCP}  do not hold.

The objective of this paper is not to improve those results of exactness, but to rely on  the SOCP relaxation of the branch flow model  to design an efficient coordination procedure between the DSO and LAs while considering the decentralized information structure.
Thus, for the remaining of the paper, we make the following assumption.
\begin{assumption} \label{assp:exactness}
For the instances considered,  problem \eqref{pb:OPFc} is an exact relaxation of the actual ACOPF problem.
\end{assumption}
\Cref{assp:exactness} is easy to verify a posteriori for a solution: it suffices to check that \eqref{eq:DSO-currentmagn} is an equality.

\section{Distribution Locational Marginal Prices}
\label{sec:DLMPs}
The idea of using %
DLMPs has been considered in the literature in the last five years \cite{huangDLMP2015},\cite{bai2017distribution},\cite{papavasiliou2017DLMP}. 
In addition to being mathematically funded, DLMPs provide a  decentralized tool  that has shown its efficiency for more than a decade at the transmission level.

In a context of an important level of local renewable energy, DLMPs for the \emph{reactive} power part  \cite{lin2019decentralizedAC} can also be a valuable tool to improve the stability and power quality of the distribution grid.
In some cases, the reactive power can be adapted locally, for instance through smart  inverters associated to renewable sources.
Thus, in this paper, we will consider DLMPs for both the active power  $p_n$ and the reactive power $q_n$ for node $n$, as defined in \Cref{sec:modelAggsNetwork}.

Besides,  advances in conic optimization to solve the OPF problem (SOCP \eqref{pb:OPFc} and SDP \cite{Sojoudi2016} relaxations) make the idea of relying on DLMPs for decentralized coordination more relevant.
Indeed, by solving the SOCP problem \eqref{pb:OPFc} or \eqref{pb:DSO-OPF} representing an instance of OPF with standard interior point methods \cite[Ch.~11]{boyd2004convex},  the active (resp. reactive) DLMPs are directly obtained by the system operator as the dual solutions $(\ddlmp_n)_{n\in\N}$ (resp. $(\ddlmq_n)_{n\in\N}$), and can be transmitted to the LAs as price incentives.
This is the basis of the decentralized coordination methods proposed in \Cref{sec:coordMethods}.

The DLMPs $(\ddlmp_n,\ddlmq_n)$ emerging as Lagrangian multipliers of the SOCP formulation \eqref{cons:OPF} can be decomposed and interpreted.  \Cref{prop:expressDLMPKKT} completes \cite[Prop.~3.2]{papavasiliou2017DLMP} where the author studies different interpretations of DLMPs in the SOCP formulation \eqref{cons:OPF} of the ACOPF, as well as in two other ACOPF formulations.
\begin{proposition} \label{prop:expressDLMPKKT}
The DLMPs $(\ddlmp_n,\ddlmq_n)$ at node $n$ can be expressed as a linear combination of the DLMPs of the ancestor node $\na$, in addition to some dual quantities related to the line capacity constraints \eqref{eq:DSO-flowCons},\eqref{eq:DSO-flowCons2} and the voltage definition constraints \eqref{eq:DSO-voltageLines},\eqref{eq:DSO-currentmagn}:
  \begin{small}
  \begin{align*}
\ddlmp_n &= \ddlmp_{\na}  -2 \bm{f}_n\bm{\gamma}_n -  2 \bm{f}_n\bm{\eta}_n^+  -  2 (\bm{f}_n  - R_n\bm{\ell}_n )\bm{\eta}_n^-  +2\bm{\beta}_n  R_n  , \\
 \ddlmq_n & = \ddlmq_{\na} - 2 \bm{g}_n\bm{ \gamma}_n -  2  \bm{g}_n\bm{\eta}_n^{+} -  2(\bm{g}_n- X_n\bm{\ell}_n)\bm{\eta}_n^-   +2\bm{\beta}_n  X_n  .
  \end{align*}
  \end{small}
\end{proposition}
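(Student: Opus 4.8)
The plan is to read both identities off the first-order stationarity (KKT) conditions of the convex program \eqref{pb:OPFc}, differentiated with respect to the two branch-flow variables $f\nti$ and $g\nti$ carried by the edge $(n,\na)$. Since by definition $\dlmp\nti$ and $\dlmq\nti$ are the multipliers of the active and reactive balances \eqref{eq:DSO-actPowBal}, \eqref{eq:DSO-reactPowBal}, the ancestor relation will emerge from the way the single scalar $f\nti$ (resp. $g\nti$) is shared between the balance written at $n$ and the balance written at $\na$.

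First I would form the Lagrangian $\L$ of \eqref{pb:OPFc} with the multipliers $\beta\nti,\dlmp\nti,\dlmq\nti,\gamma\nti,\eta\nti^+,\eta\nti^-$ attached to \eqref{eq:DSO-voltageLines}--\eqref{eq:DSO-flowCons2} under the usual KKT convention (each inequality written as ``$\leq 0$'', with $\gamma\nti,\eta\nti^\pm\geq 0$ as declared in \Cref{sec:modelAggsNetwork}). The structural observation that drives everything --- and the one place the radial (tree) structure of $\G$ is used --- is that for a fixed edge $(n,\na)$ the variable $f\nti$ occurs in only four groups of constraints: with coefficient $-2R_n$ in the voltage-drop equation \eqref{eq:DSO-voltageLines} of $n$; with coefficient $+1$ in the active balance \eqref{eq:DSO-actPowBal} of $n$; with coefficient $-1$ in the active balance \eqref{eq:DSO-actPowBal} of its unique ancestor $\na$ (the child term $m=n$ of the sum $\sum_{m:m_-=\na}$ being the only one carrying $f\nti$); and quadratically, through $2f\nti$ and $2(f\nti-R_n\ell\nti)$, in the conic/capacity constraints \eqref{eq:DSO-currentmagn}, \eqref{eq:DSO-flowCons}, \eqref{eq:DSO-flowCons2}. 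As $\Phi$ is independent of $f,g$, the objective contributes nothing.

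Collecting these terms, the stationarity condition $\partial\L/\partial f\nti=0$ reads
\begin{align*}
-2R_n\beta\nti + \dlmp\nti - \dlmp\nati + 2f\nti\gamma\nti + 2f\nti\eta\nti^+ + 2(f\nti-R_n\ell\nti)\eta\nti^- = 0,
\end{align*}
and solving for $\dlmp\nti$ gives precisely the first claimed identity once the scalar equalities are stacked over $t\in\T$ into the bold vectors $\ddlmp_n,\bm{f}_n,\dots$ (the products being read componentwise). The reactive identity follows verbatim from $\partial\L/\partial g\nti=0$: here $g\nti$ carries the coefficient $-2X_n$ in \eqref{eq:DSO-voltageLines}, $+1$ and $-1$ in the reactive balances of $n$ and $\na$, and $2g\nti$, $2(g\nti-X_n\ell\nti)$ in the conic/capacity terms, so rearranging yields the second formula.

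The calculation is routine; the care needed is purely in the bookkeeping. I must keep each multiplier's sign consistent with the relaxation as set up, and justify that the ancestor balance is the only other node equation containing $f\nti$ --- this is exactly the one-to-one correspondence between edges and flow variables afforded by $\G$ being a tree, and is the single substantive step. One should also note that KKT multipliers exist and stationarity is valid: this holds because \eqref{pb:OPFc} is convex and feasible (\Cref{assp:exiSsolution}) under a standard constraint qualification such as Slater's, as invoked in \cite{boyd2004convex}.
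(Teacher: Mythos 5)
Your proposal is correct and follows essentially the same route as the paper: form the Lagrangian of the OPF problem with the stated multipliers and read both identities off the stationarity conditions $\partial\L/\partial f\nti=0$ and $\partial\L/\partial g\nti=0$, the ancestor term $\dlmp\nati$ (resp.\ $\dlmq\nati$) arising because $f\nti$ (resp.\ $g\nti$) enters the balance at $\na$ through the child sum with coefficient $-1$. Your added remarks on the tree structure and on the validity of the KKT conditions are sound elaborations of bookkeeping the paper leaves implicit.
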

\begin{proof}
The Lagrangian function associated to problem \eqref{pb:DSO-OPF} is:
\begin{equation}
\begin{split}
\mathcal{L}= &  \phi_0(\xx) +  \sum_{\t\in\T} \sum_{\n\in\N} \Big[ \eta\nti^+ \times \Big(f\nti^2 + g\nti^2 - S_n^2 \Big)   \ 
+   {\eta\nti^-} \times \Big( \big(f\nti-R_nl\nti\big)^2 +\big(g\nti-X_nl\nti\big)^2 - S_n^2 \Big)    \\
+ & \beta\nti\times \Big( v\nti-2(R_nf\nti+X_n g\nti )+ l\nti (R_n^2+X_n^2) - v\nati  \Big)  \ 
+   \gamma\nti \times \Big( f\nti^2 + g\nti^2 - v\nti l\nti  \Big) \\
+ & \lambda\nti\times \Big( f\nti - \sum_{m \in \delta_n^+} \big(f\mti-l\mti R\mti \big) + p\nti  +G_n v\nti \Big)  \\
+ &   \mu\nti \times \Big(  g\nti - \sum_{m\in\delta_n^+} \big(g\mti - l\mti X_m\big) +q\nti-B_nv\nti \Big)  \\
+ & \usigma\nti \times ( \uV_n - v\nti ) + \osigma\nti \times (v\nti - \oV_n) \  .
\end{split}
\end{equation}
We can then obtain the equalities of \Cref{prop:expressDLMPKKT} from the  KKT conditions of optimality, as $\dpart{\L}{f_n}=0$ and $\dpart{\L}{g_n}=0$.
  \end{proof}
  \Cref{prop:expressDLMPKKT}  does not provide a closed form of the DLMPs, as we cannot obtain an explicit expression of   Lagrangian multipliers $\bm{\gamma}_n, \bm{\beta}_n$.  The interpretation of multipliers $\bm{\gamma}_n, \bm{\beta}_n$ is not straightforward, as noticed in \cite{papavasiliou2017DLMP}, where the author states that the DLMPs  $\ddlmp_n$ can be expressed as nonlinear  functions of $ \ddlmp_{\na} ,  \ddlmq_n, \ddlmq_{\na} ,\bm{\eta}_n^{+}$ and $\bm{\eta}_n^{-}$, although these functions are implicit. 
  However, \Cref{prop:expressDLMPKKT} shows that the DLMPs at a given node are linearly linked to the DLMPs at the parent's node.
This is further highlighted in \Cref{prop:DLMPnoRes} below where it is shown that, if reactances are negligible, then the DLMPs at one node are equal to the DLMPs at the ancestor node.
  \begin{proposition}\label{prop:DLMPnoRes}
In the limit of negligible reactances and shunt reactances at node $n$, i.e $$R_n, \ X_n, \ G_n, \ B_n \longrightarrow 0 $$  and if the line $(n,\na)$ is not saturated at time $t$ (i.e. inequalities \eqref{eq:DSO-flowCons},\eqref{eq:DSO-flowCons2} are strict), then the DLMPs at $n$ at $t$  and at the parent node $\na$ are equal: 
    \begin{equation} \label{eq:DLMPatancestor}
      \ | \dlmp\nti - \dlmp\nati| \longrightarrow 0 \ ,\ \  |\dlmq\nti -  \dlmq\nati|\longrightarrow 0
     \ .
      \end{equation}
    \end{proposition}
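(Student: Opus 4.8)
\medskip
The plan is to work from the per--time--period form of the recursion in \Cref{prop:expressDLMPKKT},
\begin{align*}
\dlmp\nti - \dlmp\nati &= -2f\nti\gamma\nti - 2f\nti\eta\nti^+ - 2(f\nti - R_n\ell\nti)\eta\nti^- + 2\beta\nti R_n,\\
\dlmq\nti - \dlmq\nati &= -2g\nti\gamma\nti - 2g\nti\eta\nti^+ - 2(g\nti - X_n\ell\nti)\eta\nti^- + 2\beta\nti X_n,
\end{align*}
and to show that every summand on the right-hand side vanishes in the limit $R_n,X_n,G_n,B_n\to 0$. First I would dispose of the terms controlled by the hypotheses. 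Since the line $(n,\na)$ is not saturated, constraints \eqref{eq:DSO-flowCons} and \eqref{eq:DSO-flowCons2} are strict, so complementary slackness gives $\eta\nti^+=\eta\nti^-=0$, killing the second and third summands outright. The last summands $2\beta\nti R_n$ and $2\beta\nti X_n$ carry an explicit factor $R_n$ (resp.\ $X_n$), hence vanish as long as $\beta\nti$ stays bounded. This reduces the whole statement to proving that the single remaining contribution $-2f\nti\gamma\nti$ (resp.\ $-2g\nti\gamma\nti$) tends to zero, i.e.\ essentially that $\gamma\nti\to 0$, because the flows $f\nti,g\nti$ are bounded by the capacity $S_n$ through \eqref{eq:DSO-flowCons}.

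The core of the argument is therefore to show $\gamma\nti\to 0$, and the cleanest route is the stationarity condition $\partial\L/\partial\ell\nti=0$. Collecting every place where $\ell\nti$ appears in the Lagrangian of \eqref{pb:DSO-OPF} — the loss objective ($\alpha_{\mathrm{loss}}R_n$), the voltage-definition constraint \eqref{eq:DSO-voltageLines} ($\beta\nti(R_n^2+X_n^2)$), the current-magnitude constraint \eqref{eq:DSO-currentmagn} ($-\gamma\nti v\nti$), the flow constraint \eqref{eq:DSO-flowCons2} (the $\eta\nti^-$ term), and the active/reactive balances \eqref{eq:DSO-actPowBal}--\eqref{eq:DSO-reactPowBal} of the \emph{ancestor} node $\na$, where $\ell\nti$ enters with coefficients $R_n$ and $X_n$ — yields
\begin{equation*}
\gamma\nti\, v\nti = \alpha_{\mathrm{loss}}R_n - 2\eta\nti^-\big(R_n(f\nti-R_n\ell\nti)+X_n(g\nti-X_n\ell\nti)\big) + \beta\nti(R_n^2+X_n^2) + \dlmp\nati R_n + \dlmq\nati X_n.
\end{equation*}
Every term on the right carries an explicit factor $R_n$ or $X_n$ (the $\eta\nti^-$ term already being zero by the non-saturation assumption), so the right-hand side tends to $0$ in the stated limit. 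Since the squared voltage is bounded below, $v\nti\ge\uV_n>0$, dividing gives $\gamma\nti\to 0$, which closes the chain: both $\dlmp\nti-\dlmp\nati$ and $\dlmq\nti-\dlmq\nati$ converge to $0$.

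The step I expect to be the main obstacle is the \emph{boundedness} claim used throughout: to conclude that the explicit $R_n,X_n$ factors force the corresponding terms to $0$, one must know that the primal and dual quantities $\beta\nti$, $\dlmp\nati$, $\dlmq\nati$, $f\nti$, $g\nti$, $\ell\nti$ do not blow up as the node-$n$ parameters shrink. Primal boundedness is not problematic — flows are capped by $S_n$ in \eqref{eq:DSO-flowCons}, voltages lie in $[\uV_n,\oV_n]$, and $\ell\nti=(f\nti^2+g\nti^2)/v\nti$ stays finite because $v\nti\ge\uV_n>0$ — but the boundedness of the dual multipliers $\beta\nti,\dlmp\nati,\dlmq\nati$ along the vanishing-impedance family is the delicate point. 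I would handle it by invoking a constraint qualification (e.g.\ Slater/LICQ under \Cref{assp:exiSsolution,assp:exactness}) guaranteeing a compact, continuously-varying dual optimal set in a neighbourhood of the limiting parameter values, so that the multipliers remain uniformly bounded; this is what makes the informal ``$\longrightarrow 0$'' statement rigorous.
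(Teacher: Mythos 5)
Your proposal is correct and follows essentially the same route as the paper's own proof: complementary slackness eliminates $\eta\nti^+,\eta\nti^-$, stationarity of the Lagrangian with respect to $\ell\nti$ shows that $\gamma\nti v\nti$ equals a sum of terms each carrying an explicit $R_n$ or $X_n$ factor, hence $\gamma\nti$ vanishes since $v\nti>0$, and the recursion of \Cref{prop:expressDLMPKKT} then collapses to \eqref{eq:DLMPatancestor}. The only difference is that the paper evaluates everything directly at the limit point $R_n=X_n=G_n=B_n=0$ (so all residual terms are exactly zero and no boundedness argument is needed), whereas you treat a genuine vanishing-impedance family, for which the uniform boundedness of the multipliers $\beta\nti,\dlmp\nati,\dlmq\nati$ that you flag is indeed the extra ingredient required to make the limit rigorous.
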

    \ifreferonline
The proof is given in the online version\cite{paulinjacquot2020dlmp}.
    \else
    \begin{proof}  As capacity constraints are not binding, we get from the complementarity conditions that $\eta\nti^+= \eta\nti^-=0$. From the KKT condition obtained by taking the derivative of the Lagrangian w.r.t $\ell\nti$ and as   $R_n=X_n=G_n=B_n=0$, we get:
      \begin{equation*}
0= \gamma\nti v\nti \ .
\end{equation*}
As $v\nti > 0$, we necessarily have $\gamma\nti=0$. Thus,  simplifying the equalities stated  in  \Cref{prop:expressDLMPKKT} gives exactly \eqref{eq:DLMPatancestor}.
\end{proof}
\fi
      \Cref{prop:DLMPnoRes} has further consequences: if the conditions hold for all nodes $n\in\N$, then all DLMPs are equal to the DLMPs at the root node $\ddlmp_0,\ddlmq_0$. In particular, for each $n\in\N, \ddlmp_n= \nabla_{ \bm{\pRE}_0}\phi_{\text{inj}}(\bm{\pRE}_0) \ $  i.e. the DLMPs are all equal to the root node marginal production cost, which is what we expect to obtain.
      If resistances are nonzero and capacity constraints are saturated, the DLMPs will deviate from this value to account for the costs of losses and congestion effects.

      \bigskip

Let us reformulate problem \eqref{pb:OPFc} formally to consider the decomposition between the DSO and LAs, as:
\begin{subequations}
  \label{pb:formalComplete}
\begin{align}
& \min_{\xx}\Phi(\xx) \\
 & A_0 \xx_0 + B \xx_\A = \bm{b}& (\llam)   \label{cs:formalCouplingLin} \\
  & %
    \xx_0 \in \X_0 \label{cs:formalx0}\\
  & %
    \xx_\A \in \X_\A \label{cs:formalxA},
\end{align}
\end{subequations}
where $\xx \eqd (\xx_0, \xx_\A) $ and where
\begin{itemize}[wide]
  \item $\xx_\A \eqd (\xx_a)_{a\in\A}$ denotes the LAs (consumption) variables with feasibility set $\X_\A$ with 
\begin{equation*}
\xx_a \eqd (p\nt,q\nt)_{n\in\N_a,t\in\T}
\end{equation*}
related to her affiliated nodes (and the corresponding consumption/production variables);
\item $\xx_0$ denotes the DSO (operation) variables with feasibility set $\X_0$, that is:
\begin{equation*}
\xx_0 \eqd (\bm{p}_0,\bm{q_0},\bm{v},\bm{\ell},\bm{f},\bm{g}) \ .
\end{equation*}
\end{itemize}

Constraint \eqref{cs:formalCouplingLin} refers to the coupling constraints between LAs and DSO variables \eqref{eq:DSO-actPowBal}.
It is assumed that matrix  $B$ is block diagonal $B=\text{diag}(B_a)_{a\in\A}$, where $B_a\in \mathcal{M}_{k_a,n_a}(\rr)$.
We denote by  $k\eqd \sum_a k_a$ the dimension of the LAs variables, thus  $A_0 \in \M_{k,n_0}(\rr)$.
From now on,  the notation  $\llam  \in \rr^{k} $ is used  to denote the complete vector of Lagrangian multipliers associated to \eqref{cs:formalCouplingLin}, corresponding to $(\ddlmp,\ddlmq)$ in Problem  \eqref{pb:OPFc}.

From \eqref{pb:formalComplete}, we derive:
\begin{align}
  \label{eq:decompLag}
  \begin{array}{l}
 \min_{\xx_0\in\X_0,\xx_\A\in\X_\A}\Phi(\xx) \\ 
 \text{s.t. } A_0 \xx_0 + B \xx_\A = \bm{b}    
  \end{array} =
  \min_{\xx_\A \in\X_A}\phi_\A(\xx_\A) +   \begin{array}{l} \min_{\xx_0 \in \X_0} \phi_0(\xx_0)\\
                            \text{s.t. } A_0 \xx_0 + B \xx_\A = \bm{b}    
                         \end{array}
  = \min_{\xx_\A \in\X_A} \phi_\A(\xx_\A) +  F(\xx_\A)
\end{align}
where
\begin{equation}
  \label{eq:Fdef}
  F(\xx_\A) \eqd   \min_{\xx_0 \in \X_0}  \max_{\llam} \phi_0(\xx_0) + \llam^\tr( A_0 \xx_0 + B \xx_\A - \bm{b}   ) \ .
\end{equation}
The function $F$ is not necessarily differentiable. However, it is subdifferentiable as it is convex. From there we can derive the following result on the DLMPs:
\begin{proposition} \label{prop:dlmpsSubgrad}
  For each node $n\in\N$, the DLMPs $\ddlmp_n,\ddlmq_n$ correspond to subgradients of the DSO's optimal cost with respect to $\pp_n$ (resp $\qq_n$), that is
  \begin{equation*}
\forall t \in \T, \      \dlmp\nti \in \partial_{p\nti}{F(\xx_\A)}  \ ,  \    \dlmq\nti  \in   \partial_{q\nti}{F(\xx_\A)}
  \end{equation*}
\end{proposition}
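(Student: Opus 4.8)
The plan is to recognize $F$ as a pointwise supremum of affine functions of $\xx_\A$ and to read the subgradients off the maximizing affine pieces. Since \eqref{pb:OPFc} is a convex (SOCP) program that is feasible by \Cref{assp:exiSsolution}, strong duality holds for the inner problem defining $F$, so in \eqref{eq:Fdef} I may exchange $\min_{\xx_0}$ and $\max_\llam$. As the term $\llam^\tr B\xx_\A$ is constant in $\xx_0$, this gives
\begin{equation*}
F(\xx_\A) = \max_\llam \Big[\, q(\llam) + (B^\tr\llam)^\tr \xx_\A \,\Big], \qquad q(\llam) \eqd \min_{\xx_0\in\X_0} \phi_0(\xx_0) + \llam^\tr(A_0\xx_0 - \bm{b}) ,
\end{equation*}
where the key point is that the dual function $q$ does \emph{not} depend on $\xx_\A$: the only $\xx_\A$-dependence sits in the linear term $(B^\tr\llam)^\tr\xx_\A$, whose slope is $B^\tr\llam$.

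First I would fix $\xx_\A$ and let $\llam^\star$ be a maximizer in the display above; by construction these optimal dual variables are exactly the DLMPs $(\ddlmp,\ddlmq)$. Because each map $\xx_\A \mapsto q(\llam) + (B^\tr\llam)^\tr\xx_\A$ is affine and $F$ is their supremum, the standard subgradient characterization of a supremum of affine functions applies: for any $\xx'_\A$ one has $F(\xx'_\A) \ge q(\llam^\star) + (B^\tr\llam^\star)^\tr\xx'_\A = F(\xx_\A) + (B^\tr\llam^\star)^\tr(\xx'_\A - \xx_\A)$, where the last equality uses that the supremum at $\xx_\A$ is attained at $\llam^\star$. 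This is precisely the subgradient inequality, so $B^\tr\llam^\star \in \partial_{\xx_\A} F(\xx_\A)$, with a positive sign since $\xx_\A$ enters $F$ with coefficient $+B^\tr\llam$.

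Next I would translate this global statement into the componentwise claim. The coupling constraint \eqref{cs:formalCouplingLin} encodes the active and reactive power balances \eqref{eq:DSO-actPowBal}, \eqref{eq:DSO-reactPowBal}, in which $p\nti$ (resp. $q\nti$) appears with coefficient $+1$ in the balance at node $n$ and in no other coupling row. Hence the component of $B^\tr\llam^\star$ associated with $p\nti$ is exactly the multiplier of the active balance at $n$, namely $\dlmp\nti$, and the $q\nti$-component is $\dlmq\nti$. Restricting the subgradient inequality to perturbations of $\xx_\A$ in the single coordinate $p\nti$ (resp. $q\nti$) then yields $\dlmp\nti \in \partial_{p\nti} F(\xx_\A)$ and $\dlmq\nti \in \partial_{q\nti} F(\xx_\A)$, which is the claim.

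The main obstacle is bookkeeping rather than depth. I must justify the min-max exchange (strong duality for the convex SOCP, obtained from feasibility together with a constraint qualification on $\X_0$) and, more delicately, verify the identification of the $p\nti$- and $q\nti$-components of $B^\tr\llam$ with the named multipliers $\dlmp\nti,\dlmq\nti$ of \eqref{eq:DSO-actPowBal}, \eqref{eq:DSO-reactPowBal}; this is where the precise encoding of the balances into $(A_0,B,\bm{b})$ and the sign conventions must be tracked carefully.
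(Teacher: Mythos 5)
Your proof is correct and takes essentially the same route as the paper: the paper's proof is a one-line citation of the global sensitivity inequality of Boyd and Vandenberghe applied to the problem defining $F(\xx_\A)$, and your argument (weak duality at perturbed points plus strong duality at the base point, reading the slope $B^\tr\llam^\star$ off the maximizing affine piece) is exactly a self-contained derivation of that inequality. Your componentwise identification of $B^\tr\llam^\star$ with $(\dlmp\nti,\dlmq\nti)$ is consistent with the paper's sign conventions in \eqref{eq:DSO-actPowBal}--\eqref{eq:DSO-reactPowBal}, so nothing is missing.
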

\begin{proof}
  We apply the sensitivity inequality \cite[sec. 5.57]{boyd2004convex} to the problem defined by $F(\xx_\A)$, parameterized by $\xx_\A$. This inequality  corresponds to the definition of the subgradients of $F(.)$.
\end{proof}
Because $\phi_0$ is not strictly convex w.r.t. $\xx_0$, in general we cannot guarantee the uniqueness of the DLMPS.

From \Cref{prop:dlmpsSubgrad}, one could wonder if DLMPs are to be always positive.
Indeed, a slight increase in active or reactive consumption at node $n$ would, in general, trigger the same increase (in addition to network losses) in production at the root node $0$ and, under the assumption that production costs $\phi_0$ is strictly increasing in $p^{p}_{0,t}$, then $\nabla_{p^{p}_{0,t}} \phi_0$ would be positive.
However, this is not always the case: the numerical example given in \Cref{sec:numericalStud} shows a case where a DLMP is slightly negative.
    
\section{Decentralized Coordination Methods}
\label{sec:coordMethods}

The idea behind the proposed coordination algorithm is to consider the OPF problem in a decentralized framework, where different parts of the set of decision variables are managed by different agents:
\begin{itemize}[wide]
  \item each LA $a\in\A$ decides of local consumption variables $\xx_a$ 
related to her affiliated nodes (and the corresponding consumption/production variables);
\item the DSO is responsible of the operation of the network, that is, of the variables $\xx_0$.%
\end{itemize}
Besides, a decentralized coordination mechanism is also relevant to address the partial information held by each agent. 
Typically, the network characteristics (topology, capacities, etc.) related to constraints \eqref{cons:OPF} are considered as confidential information by the DSO and shall not be revealed to other actors of the system. 
On the other hand, consumption and production constraints can constitute private information for electric consumers and, thus, should not be revealed by an aggregator to other actors or network operator.

 The essential step in the proposed DLMP-based procedure  is to rely on a decomposition method that enables to obtain $\xx_\A^*$ and DLMPs $\llam^*$ while respecting the decentralized structure of decisions and information: the DSO does not have access to $\X_\A$ and is responsible of variables $\xx_0$, while each LA $a$ does not have access to $\X_0$ or $\X_{\tilde{a}}$ for $ \tilde{a}\neq a $, and is responsible of variables $\xx_a$.
 In the remaining of this \Cref{sec:coordMethods}, we review different decomposition methods  to be used in this framework.

 To ensure the convergence and validity of the different methods, we consider the following additional standard assumption of strong duality:
    \begin{assumption} \label{assp:strongDuality}
A constraint qualification (e.g. Slater's condition) holds for \eqref{pb:formalComplete}, such that strong duality holds \cite[ch.~5]{boyd2004convex}:
      \begin{equation} \label{eq:strongDuality}
        \min_{\xx \in \X} \max_{\llam \in \rr^k} \L(\xx,\llam)  = \max_{\llam \in \rr^k}     \min_{\xx \in \X} \L(\xx,\llam) \ .
        \end{equation}
      \end{assumption}
  
\subsection{Dual Decomposition}

Considering $\xx_0$ and $\xx_\A$ satisfying \eqref{cs:formalx0} and \eqref{cs:formalxA}, the Lagrangian function associated to   \eqref{pb:formalComplete} is defined as:
\begin{equation} \label{eq:lagrangeanDef}
  \L(\xx_0,\xx_\A,\llam) \eqd \Phi(\xx) + \llam^\tr (A_0 \xx_0 + B \xx_\A - \bm{b}) \ ,
  \end{equation}
which is a basic ingredient of the following method. 

The dual decomposition method \cite{palomar2006tutorial} relies on the  consideration of the  subproblems:
\begin{align} \label{eq:P2relaxed}
\tag{$\pro'_a({\llam_a}) $}   \min_{\xx_a \in \X_a} \hh \phi_a(\xx_a) +\hh  \llam_a^\tr \hh B_a\xx_a  , \\
  \label{eq:P0relaxed} \tag{$\pro'_0({\llam}) $}  \min_{\xx \in \X_0 } \hh \phi_0(\xx_0)+ \llam^\tr A_0\xx_0 .%
\end{align}
A dual ascent enables to optimize  \eqref{pb:formalComplete} in a distributed way, by considering the following  \Cref{algo:dualAscent}:

\begin{algorithm}[H]%
\begin{algorithmic}[1]
\Require $\llam^{(0)}$, stopping criterion , steps $(\alpha_k)_k$ \; %
\State $k \leftarrow 0$ \;%
\While{ stopping criterion not true } %
\For{ each LA $a \in \A$}
\State [LA $a$] receive $\llam^{(k)}_a$ from DSO\;
\State [LA $a$]  $\xx_a^{(k+1)} \in  \sol \eqref{eq:P2relaxed}$ \;
\EndFor
\State [DSO]  $\xx_0^{(k+1)}  \in \sol \eqref{eq:P0relaxed} $\;
\State [DSO]  $\llam^{(k+1)} =  \llam^{(k)} + \alpha_k( A_0\xx_0^{(k+1)} + B\xx_{\A}^{(k+1)}\hh -\bm{b}) $ \;
\State $  k \leftarrow k+1 $ \;
\EndWhile
\end{algorithmic}
\caption{Decomposition through Dual Ascent}
\label{algo:dualAscent}
\end{algorithm}

If the sequence $(\alpha_k)_k $ is chosen such that $ \sum_k \alpha_k = + \infty$ while  $\sum_k \alpha_k^2 < \infty$, and $\Phi$ satisfies some strict convexity assumption, \Cref{algo:dualAscent} converges to a solution of \eqref{pb:formalComplete} \cite{palomar2006tutorial}, \cite[Ch.~6]{bertsekas1997nonlinear}.

\subsection{Alternating Direction Method of Multipliers}

The Alternating Direction Method of Multipliers (ADMM) was originally introduced in \cite{glowinski1975approximation} and gained in popularity due to applications to machine learning and the seminal survey \cite{boyd2011distributed}.

As for the Auxiliary Problem Principle, the method relies on the augmented Lagrangian function:
\begin{equation} \label{eq:augLagrangeanDef}
  \L_{\rho}(\xx_0,\xx_\A,\llam) \eqd \Phi(\xx) + \llam^\tr (A_0 \xx_0 + B \xx_\A - \bm{b}) + \frac{\rho}{2} \norm{A_0 \xx_0 + B \xx_\A - \bm{b}}_2^2 \ , 
\end{equation}
where $\rho>0$ is a parameter that is used as a step size in the method:
\begin{algorithm}[H]%
\begin{algorithmic}[1]
\Require $\llam^{(0)}$, stopping criterion , steps $(\alpha_k)_k$ \; %
\State $k \leftarrow 0$ \;%
\While{ stopping criterion not true } %
\For{ each LA $a \in \A$}
\State [LA $a$] receive $\llam^{(k)}_a$ and $A_0 \xx_0^{(k)}$ from DSO\;
\State [LA $a$]  $\xx_a^{(k+1)} \in  \argmin{\xx_a \in \X_a}  \  \phi_a(\xx_a) +\hh  \llam_a^{(k)\tr} \hh B_a\xx_a \hh +\txt\tfrac{\rho}{2} \norm{A_0 \xx_0^{(k)} + B \xx_\A - \bm{b}}_2^2 $\;
\EndFor
\State [DSO]  $\xx_0^{(k+1)}  \in \argmin{\xx_0 \in \X_0}  \  \phi_0(\xx_0) +\hh  \llam^{(k)\tr} \hh A_0\xx_0 \hh +\txt\tfrac{\rho}{2} \norm{A_0 \xx_0 + B \xx_\A^{(k+1)} - \bm{b}}_2^2 $\;
\State [DSO]  $\llam^{(k+1)} =  \llam^{(k)} + \rho( A_0\xx_0^{(k+1)} + B\xx_{\A}^{(k+1)}\hh -\bm{b}) $ \;
\State $  k \leftarrow k+1 $ \;
\EndWhile
\end{algorithmic}
\caption{Decentralized Optimization through ADMM}
\label{algo:ADMM}
\end{algorithm}
In ADMM, in addition to communicate the DLMPs $\llam^{(k)}$, the DSO also has to send the current profiles $A_0\xx_0^{(k)}$ to each LA at each iteration $k$.
Getting back to \eqref{eq:DSO-actPowBal}, this means that the DSO communicates a ``base'' profile $(\tilde{p}^{(k)}\nti, \tilde{q}^{(k)}\nti)\nti$ given by $\tilde{p}^{(k)}\nti \eqd -f\nti^{(k)} \hm +\displaystyle\hm \sum_{m: m_{-}=n}\hmmm \big(f\mti^{(k)} \hh- \hh \ell\mti^{(k)} R\mti \big)\hh -G_n v\nti^{(k)}$ , and similarly for $\tilde{q}^{(k)}\nti$.

The convergence of ADMM is ensured by the  following result:
\begin{theorem}{\cite[Sec.3.2.1]{boyd2011distributed}}\label{thm:admmCvg}
  Under \Cref{assp:strongDuality} and convexity of $\phi_0$ and $\phi_\A$, we have the following convergence result:
  \begin{itemize}
  \item feasibility convergence: $\norm{ A_0\xx_0^{(k)} + B\xx_{\A}^{(k)}\hh -\bm{b}} \underset{k \rightarrow \infty}{\longrightarrow} 0$ ,
  \item objective convergence: $\phi_0(\xx_0^{(k)}) + \phi_\A(\xx_{\A}^{(k)}) \underset{k \rightarrow \infty}{\longrightarrow} \Phi^*$ where $\Phi^*$ is the optimal value of \eqref{pb:formalComplete},
    \item dual (DLMPs) convergence: $\llam^{(k)} \ \underset{k \rightarrow \infty}{\longrightarrow} \llam^*$, where $\llam^*$ are optimal Lagrangian multipliers of  \eqref{pb:formalComplete}.
  \end{itemize}
\end{theorem}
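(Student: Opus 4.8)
The plan is to reproduce the classical Lyapunov-function argument for two-block ADMM, identifying $\xx_\A$ as the first primal block and $\xx_0$ as the second block, since \Cref{algo:ADMM} updates the $\xx_a$'s first, then $\xx_0$, then $\llam$. First I would invoke \Cref{assp:strongDuality} together with convexity of $\phi_0$ and $\phi_\A$ to produce a saddle point $(\xx_0^\star,\xx_\A^\star,\llam^\star)$ of the unaugmented Lagrangian \eqref{eq:lagrangeanDef}, so that $\xx^\star \eqd (\xx_0^\star,\xx_\A^\star)$ solves \eqref{pb:formalComplete}, $\llam^\star$ is an optimal multiplier, and $\Phi^\star=\Phi(\xx^\star)$. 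Writing the primal residual $\bm{r}^{(k)} \eqd A_0\xx_0^{(k)}+B\xx_\A^{(k)}-\bm{b}$, the whole argument hinges on controlling the potential
\begin{equation*}
V^{(k)} \eqd \tfrac{1}{\rho}\norm{\llam^{(k)}-\llam^\star}_2^2 + \rho\norm{A_0(\xx_0^{(k)}-\xx_0^\star)}_2^2 \geq 0 .
\end{equation*}

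The technical core is the descent inequality
\begin{equation*}
V^{(k+1)} \leq V^{(k)} - \rho\norm{\bm{r}^{(k+1)}}_2^2 - \rho\norm{A_0(\xx_0^{(k+1)}-\xx_0^{(k)})}_2^2 .
\end{equation*}
To establish it, I would write the first-order subgradient optimality conditions of the three updates. Using the dual step $\llam^{(k+1)}=\llam^{(k)}+\rho\bm{r}^{(k+1)}$ to eliminate $\llam^{(k)}$, the $\xx_\A$-update shows that $\xx_\A^{(k+1)}$ minimizes $\phi_\A(\cdot) + \big(\llam^{(k+1)} + \rho A_0(\xx_0^{(k)}-\xx_0^{(k+1)})\big)^\tr B(\cdot)$, the extra Gauss--Seidel term $\rho A_0(\xx_0^{(k)}-\xx_0^{(k+1)})$ arising because that update still uses the stale $\xx_0^{(k)}$, while the $\xx_0$-update shows that $\xx_0^{(k+1)}$ minimizes $\phi_0(\cdot) + \llam^{(k+1)\tr}A_0(\cdot)$. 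Pairing these variational inequalities with the optimality of $(\xx_0^\star,\xx_\A^\star)$ for the same shifted costs, adding them, and completing the square in $\llam$ and $A_0\xx_0$ yields the claimed one-step decrease.

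With the descent inequality in hand the three conclusions follow in order. As $V^{(k)}\geq 0$ is nonincreasing it converges, and telescoping gives $\sum_{k\geq 0}\big(\rho\norm{\bm{r}^{(k+1)}}_2^2 + \rho\norm{A_0(\xx_0^{(k+1)}-\xx_0^{(k)})}_2^2\big)<\infty$; in particular $\bm{r}^{(k)}\to 0$, which is feasibility convergence. For objective convergence I would sandwich $\Phi(\xx^{(k+1)})-\Phi^\star$ between the lower bound $-(\llam^\star)^\tr\bm{r}^{(k+1)}$, coming from the saddle-point inequality $\L(\xx^\star,\llam^\star)\leq\L(\xx^{(k+1)},\llam^\star)$, and an upper bound built from $\bm{r}^{(k+1)}$ and $A_0(\xx_0^{(k+1)}-\xx_0^{(k)})$ paired with the iterate errors, which are bounded since $V^{(k)}$ is bounded; as both residual terms vanish, $\Phi(\xx^{(k)})\to\Phi^\star$. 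Finally, boundedness of $V^{(k)}$ forces $(\llam^{(k)})$ to be bounded, every limit point is a saddle-point multiplier by the vanishing of $\bm{r}^{(k)}$ and closedness of the subdifferentials, and monotonicity of $V^{(k)}$ along any convergent subsequence pins the whole sequence to a single $\llam^\star$, giving dual convergence.

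I expect the descent inequality to be the main obstacle, specifically the clean bookkeeping of the Gauss--Seidel coupling term $\rho A_0(\xx_0^{(k)}-\xx_0^{(k+1)})$ and its absorption, with the correct sign, into the $\rho\norm{A_0(\xx_0^{(k+1)}-\xx_0^{(k)})}_2^2$ term after completing the square. Since this is exactly the content of \cite[Sec.~3.2.1 and App.~A]{boyd2011distributed}, I would otherwise defer the routine algebra to that reference.
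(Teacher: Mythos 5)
Your proposal is correct and takes essentially the same route as the paper, which offers no proof of its own and simply cites \cite[Sec.~3.2.1]{boyd2011distributed}: your Lyapunov argument is precisely the one in that reference's appendix. You also got right the one adaptation that matters here --- since \Cref{algo:ADMM} updates the LAs first and the DSO second, $\xx_0$ plays the role of the second (``$z$'') block, so the potential must be $\tfrac{1}{\rho}\norm{\llam^{(k)}-\llam^\star}_2^2+\rho\norm{A_0(\xx_0^{(k)}-\xx_0^\star)}_2^2$ with the Gauss--Seidel coupling term attached to the $\xx_\A$-update, exactly as you wrote.
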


\subsection{Primal-Dual Gauss-Seidel (PDGS) Iterations}

In  \Cref{algo:dualDecompAltMin} below, we propose a new method, referred to as PDGS, which relies on a Lagrangian relaxation of the coupling constraint \eqref{cs:formalCouplingLin} as the dual decomposition methods above.
The main difference is that we consider the original instances of \eqref{pb:DSO-OPF} instead of relaxed problem. For that, we rely on the resolution of the dual problem  of OPF problem \eqref{pb:DSO-OPF} to compute the DLMPs $\llam$, while computing the associated LAs decisions $\xx_\A$ afterwards.
Using the Lagrangian function $\L$ defined in \eqref{eq:lagrangeanDef} , problem \eqref{pb:formalComplete} can be written, with $\X \eqd \X_0 \times \X_\A$:
  \begin{align}
    \inf_{\xx \in \X} \sup_{\llam \in \rr^k} \L(\xx,\llam),
  \end{align}
  and the dual problem:
    \begin{equation} \label{pb:dualformal}
\sup_{\llam \in \rr^k}     \inf_{\xx \in \X} \L(\xx,\llam) \ = \sup_{\llam \in \rr^k} \psi_0(\llam, \xx_\A)  \ ,
    \end{equation}
    where we consider the \emph{(partial) dual function} $\psi_0$ defined as:
    \begin{equation*}
      \begin{split}
        \psi_0(\llam, \xx_\A) \eqd &  \min_{\xx_0 \in \X_0  } \big\{  \Phi (\xx_0,\xx_\A ) + \llam^\tr (A_0 \xx_0 + B \xx_\A - \bm{b}) \big\} \\
         =   \min_{\xx_0 \in \X_0  }   \big\{  \phi_0 & (\xx_0 ) + \llam^\tr A_0 \xx_0  \big\} +  \phi_\A (\xx_\A ) + \llam^\tr(B \xx_\A - \bm{b}) \ .
        \end{split}
      \end{equation*}
      Because $\X_0$ is a compact subset, $\psi_0$ is well defined. Using the notation of \eqref{pb:formalComplete}, the problem \eqref{pb:DSO-OPF} can be reformulated as:
      \begin{align*} 
  &      \min_{\xx_0\in\X_0}    \phi_0 (\xx_0 ) \\
 & \text{s.t. }        A_0 \xx_0  =\bm{b} - B \xx_\A  & (\llam)
      \end{align*}
where $\llam$ is the Lagrangian multiplier associated to the equality constraint, such that $\psi_0$ is the dual function of problem \eqref{pb:DSO-OPF} (translated by $\phi_\A(\xx_\A)$). Moreover, we have: 
      \begin{proposition}
        \label{prop:cvxconcaveDual}
 $\psi_0$  is concave in $\llam$ and convex in $\xx_\A$.
\end{proposition}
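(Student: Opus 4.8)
The plan is to exploit the additive separability $\Phi(\xx)=\phi_0(\xx_0)+\phi_\A(\xx_\A)$, which makes the inner minimization over $\xx_0$ decouple entirely from $\xx_\A$. Indeed, starting from the definition one rewrites
\begin{equation*}
\psi_0(\llam,\xx_\A)=\min_{\xx_0\in\X_0}\big\{\phi_0(\xx_0)+\llam^\tr A_0\xx_0\big\}+\phi_\A(\xx_\A)+\llam^\tr(B\xx_\A-\bm{b}) \ ,
\end{equation*}
where the coupling term $\llam^\tr B\xx_\A$ involves only $\xx_\A$ and not $\xx_0$, so the bracketed minimization is a function of $\llam$ alone. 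Compactness of $\X_0$ (already invoked to ensure $\psi_0$ is well defined) guarantees the minimum is attained, so both statements concern genuine finite-valued functions.

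For concavity in $\llam$, I would fix $\xx_\A$ and observe that for each fixed $\xx_0\in\X_0$ the map $\llam\mapsto\phi_0(\xx_0)+\phi_\A(\xx_\A)+\llam^\tr(A_0\xx_0+B\xx_\A-\bm{b})$ is affine, hence concave. Since $\psi_0(\cdot,\xx_\A)$ is the pointwise infimum over $\xx_0\in\X_0$ of this family, and the pointwise infimum of a family of concave (here affine) functions is concave, $\psi_0$ is concave in $\llam$.

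For convexity in $\xx_\A$, I would fix $\llam$ and use the rewriting above: the first term $\min_{\xx_0\in\X_0}\{\phi_0(\xx_0)+\llam^\tr A_0\xx_0\}$ is a constant with respect to $\xx_\A$, while the remaining terms are $\phi_\A(\xx_\A)$, convex by the standing convexity assumption on the LAs' cost, plus the affine term $\llam^\tr(B\xx_\A-\bm{b})$. A sum of a convex and an affine function is convex, so $\psi_0$ is convex in $\xx_\A$. There is no substantive obstacle here; the only point requiring care is the verification of the decoupling of the $\xx_0$-minimization from $\xx_\A$, which is exactly what the additive separability of $\Phi$ provides and which must be stated explicitly before both convexity arguments go through.
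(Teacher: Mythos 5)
Your proof is correct and follows essentially the same route as the paper: both use the decoupled form of $\psi_0$ (which is already how the paper writes its definition), then argue concavity in $\llam$ as a pointwise infimum of affine functions and convexity in $\xx_\A$ as the sum of the convex $\phi_\A$ and an affine term. The only difference is that you spell out the decoupling and attainment of the minimum more explicitly, which the paper leaves implicit.
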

\ifreferonline
The proof is detailed in the online version \cite{paulinjacquot2020dlmp}.
\else
      \begin{proof}
        The function $\psi_0$ is the sum of the convex function $\xx_\A \mapsto \phi_\A(\xx_\A)$ and of an affine function of $\xx_\A$, thus it is convex in $\xx_\A$.
        As a minimum of concave functions of $\llam$ (because affine), it is concave in $\llam$.
      \end{proof}
      \fi
      The dual problem \eqref{pb:dualformal} is always feasible because $\X_0$ is nonempty and $\llam \in \rr^k$. However, it is not necessarily bounded: indeed, because strong duality holds, we know that the dual problem is unbounded \emph{iff} the primal problem  \eqref{pb:DSO-OPF} is infeasible \cite[Sec.5.2]{boyd2004convex}.
The \Cref{algo:dualDecompAltMin} presented below relies on dual solutions: to ensure its convergence, we rely on truncated dual problems, resulting in bounded dual solutions.
      \begin{proposition}
        \label{prop:boundedDual}
        The modified problem $\pro^K_0(\xx_a)$ with additional variable $\bm{u}_+, \bm{u}_-$:
        \begin{equation}
          \tag{$\pro^K_0(\xx_a)$}
          \begin{split}
  &      \min_{\xx_0\in\X_0, \ \bm{u}_+, \bm{u}_-\geq 0}    \phi_0 (\xx_0 ) + K (\bm{u}_{_+} - \bm{u}_{_-}) \\
  & \text{s.t. }        A_0 \xx_0  =\bm{b} - B \xx_\A + \bm{u}_{_+} - \bm{u}_{_-}  \quad \quad\quad\quad (\llam)
  \end{split}
      \end{equation}
      admits the same dual problem as \eqref{pb:DSO-OPF} with the additional constraint $K \dsone_{k} \leq \llam \leq K \dsone_{k}$.
\end{proposition}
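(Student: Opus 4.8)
The plan is to compute the partial Lagrangian dual of $\pro^K_0(\xx_a)$ with respect to the equality constraint alone, keeping the sign constraints $\bm{u}_+,\bm{u}_-\geq 0$ and $\xx_0\in\X_0$ inside the inner minimisation, and then to show that the minimisation over the slack variables collapses exactly to a box constraint on $\llam$. First I would attach the multiplier $\llam\in\rr^k$ to the equality rewritten as $A_0\xx_0-\bm{b}+B\xx_\A-\bm{u}_++\bm{u}_-=0$ and form the Lagrangian
\begin{equation*}
  \L_K(\xx_0,\bm{u}_+,\bm{u}_-,\llam)=\phi_0(\xx_0)+K\dsone_k^\tr(\bm{u}_++\bm{u}_-)+\llam^\tr(A_0\xx_0-\bm{b}+B\xx_\A-\bm{u}_++\bm{u}_-),
\end{equation*}
so that the dual function is $g_K(\llam)=\min_{\xx_0\in\X_0,\ \bm{u}_+,\bm{u}_-\geq 0}\L_K$, which is well posed since $\X_0$ is compact.

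The key step is to exploit that $\L_K$ is \emph{affine and separable} in the two slacks: the $\bm{u}_+$ contribution is $(K\dsone_k-\llam)^\tr\bm{u}_+$ and the $\bm{u}_-$ contribution is $(K\dsone_k+\llam)^\tr\bm{u}_-$. Minimising an affine function over the nonnegative orthant returns $0$ when its coefficient vector is componentwise nonnegative and $-\infty$ otherwise. Hence the slack minimisation contributes $0$ precisely when $\llam\leq K\dsone_k$ and $\llam\geq -K\dsone_k$ hold simultaneously, i.e. on the box $-K\dsone_k\leq\llam\leq K\dsone_k$, and drives $g_K$ to $-\infty$ outside it. On that box the residual inner minimisation over $\xx_0\in\X_0$ is exactly the dual function of \eqref{pb:DSO-OPF},
\begin{equation*}
  \min_{\xx_0\in\X_0}\{\phi_0(\xx_0)+\llam^\tr A_0\xx_0\}+\llam^\tr(B\xx_\A-\bm{b}),
\end{equation*}
which coincides with $\psi_0(\llam,\xx_\A)$ up to the $\xx_\A$-constant $\phi_\A(\xx_\A)$ and therefore defines the same maximisation problem.

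It then remains only to take the supremum over $\llam$: since $g_K$ equals the dual function of \eqref{pb:DSO-OPF} on the box and $-\infty$ elsewhere, $\sup_\llam g_K(\llam)=\sup_{-K\dsone_k\leq\llam\leq K\dsone_k}\psi_0(\llam,\xx_\A)$, i.e. the dual \eqref{pb:dualformal} of \eqref{pb:DSO-OPF} with the additional box constraint $-K\dsone_k\leq\llam\leq K\dsone_k$, as claimed. I do not expect a genuine obstacle here, as the argument is a routine partial-dualisation; the one point requiring care is the sign bookkeeping of the two slacks, which must be penalised with the \emph{same} sign $K\dsone_k^\tr(\bm{u}_++\bm{u}_-)$ so that the two one-sided conditions coming from the separate orthants combine into the two-sided box (penalising only the net violation $\bm{u}_+-\bm{u}_-$ would yield merely a one-sided bound on $\llam$).
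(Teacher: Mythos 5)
Your proof is correct and follows the same (and essentially only) route the paper intends: the paper's own proof is the single line ``this comes from the definition of the dual problem,'' and your partial dualization --- attach $\llam$ to the equality, keep $\xx_0\in\X_0$ and the orthant constraints in the inner minimization, and collapse the affine slack minimization to a box constraint on $\llam$ --- is precisely the omitted computation.

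Two remarks. First, the statement as printed contains misprints: the penalty reads $K(\bm{u}_+-\bm{u}_-)$ and the box reads $K\dsone_k\leq\llam\leq K\dsone_k$. Your computation rightly uses the symmetric penalty $K\dsone_k^\tr(\bm{u}_++\bm{u}_-)$, which is what produces the two-sided box $-K\dsone_k\leq\llam\leq K\dsone_k$; this corrected version is the one the paper actually relies on later (the set $\rr^k_K$ in \Cref{prop:algoValueGame} and \Cref{thm:GScvg}, and the dual update in \Cref{algo:dualDecompAltMin}), so you have proved the intended statement rather than the literal one. Second, a small quibble with your closing parenthetical: with both slacks nonnegative, penalizing the net violation $K\dsone_k^\tr(\bm{u}_+-\bm{u}_-)$ does not yield merely a one-sided bound on $\llam$; the $\bm{u}_+$-orthant forces $\llam\leq K\dsone_k$ while the $\bm{u}_-$-orthant forces $\llam\geq K\dsone_k$, so the dual feasible set degenerates to the single point $\llam=K\dsone_k$ --- which is, incidentally, exactly what the misprinted box says. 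This does not affect your argument, whose core (the sign bookkeeping that turns the two orthant conditions into the box) is exactly right.
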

      \begin{proof}
This comes from the definition of the dual problem. Details are omitted.
\end{proof}

\begin{algorithm}[ht]%
\begin{algorithmic}[1]
\Require $\llam^{(0)}$, stopping criterion \; %
\State $k \leftarrow 0$ \;%
\While{ stopping criterion not true } %
\For{ each LA $a \in \A$}
\State [LA $a$]$ \begin{array}{l}   \xx_\A \in  \argmin{\xx_a\in\X_a} \  { \phi_a(\xx_a)\hh  + \llam^{(k)}_a}^{\hh \tr} \hh B_a \xx_a  = \argmin{\xx_a\in\X_a} \  \psi_0( \llam^{(k)} \hh,\xx_a)  \end{array}$ \; \label{algline:dualDecomp:agg}
\State update solution $\xx_a^{(k+1)} \eqd \tfrac{1}{k+1} ( k \xx_a^{(k)}+ \xx_a)$ \;
\EndFor
\If {~$\pro_0(\xx_a^{(k+1)})$  is feasible} \label{algline:dualDecomp:DSOlamcond}
\State [DSO] obtain dual solution $\llam  \displaystyle\arg \max_{\llam \in \rr^k} \psi_0( \llam,\xx_\A^{(k+1)})$ of problem $\pro_0(\xx_a^{(k+1)})$\;
\Else
  \State  [DSO] obtain dual solution: $\llam \in  \displaystyle\arg \hh\hh\hh\max_{K \dsone_{k} \leq \llam \leq K \dsone_{k}} \psi_0( \llam,\xx_\A^{(k+1)})$ of modified problem $\pro^K_0(\xx_a^{(k+1)})$\;
  \EndIf \label{algline:dualDecomp:DSOlamend}
  \State [DSO]  update dual solution (DLMPs) $\llam^{(k+1)} \eqd \tfrac{1}{k+1} ( k \llam^{(k)}+ \llam)$ \;
\State $  k \leftarrow k+1 $ \;
\EndWhile \;
\end{algorithmic}
\caption{Decentralized Optimization through PDGS}
\label{algo:dualDecompAltMin}
\end{algorithm}

      The idea behind \Cref{algo:dualDecompAltMin} is that, as stated above, the primal problem \eqref{pb:DSO-OPF} is not always feasible (depending on the value of $\xx_\A$), but the dual problem is always feasible (although it can be unbounded) and we can always get a dual solution $\llam$. We rely on \Cref{prop:boundedDual} and solve the modified problem to ensure this dual solution remains bounded.
      We have  the following formal result:
      \begin{proposition}
        In case  of convergence, \Cref{algo:dualDecompAltMin} provides a DLMP-based decentralized coordination method which enables to optimize DERs while satisfying network constraints, giving an optimal solution of \eqref{pb:OPFc}.
      \end{proposition}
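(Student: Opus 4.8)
The plan is to use the convergence hypothesis to show that the limit of the iterates is a saddle point of the partial dual function $\psi_0$, and then to invoke the convex--concave structure of $\psi_0$ (\Cref{prop:cvxconcaveDual}) together with strong duality (\Cref{assp:strongDuality}) to identify this saddle point with a primal--dual optimal pair of \eqref{pb:OPFc}. Write $\xx_\A^*$ and $\llam^*$ for the assumed limits of the sequences $(\xx_\A^{(k)})_k$ and $(\llam^{(k)})_k$ generated by \Cref{algo:dualDecompAltMin}, and recall that $\psi_0$ is well defined and continuous since $\X_0$ is compact.

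First I would characterize the limit as a saddle point. At each iteration the LA step returns a minimizer of $\psi_0(\llam^{(k)},\cdot)$ over $\X_\A$, while the DSO step returns a maximizer of $\psi_0(\cdot,\xx_\A^{(k+1)})$ over $\llam$. Passing to the limit and using continuity of $\psi_0$, the pair $(\xx_\A^*,\llam^*)$ satisfies the saddle-point inequalities
\begin{equation*}
\psi_0(\llam,\xx_\A^*) \ \leq\ \psi_0(\llam^*,\xx_\A^*) \ \leq\ \psi_0(\llam^*,\xx_\A) \qquad \forall\,\llam,\ \forall\,\xx_\A\in\X_\A,
\end{equation*}
equivalently $\xx_\A^* \in \argmin{\xx_\A\in\X_\A}\psi_0(\llam^*,\xx_\A)$ and $\llam^* \in \argmax{\llam}\psi_0(\llam,\xx_\A^*)$.

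Next I would convert this saddle point into optimality for the centralized problem. By \Cref{prop:cvxconcaveDual}, $\psi_0$ is convex in $\xx_\A$ and concave in $\llam$, and \Cref{assp:strongDuality} provides strong duality, so the common minimax value is attained at $(\xx_\A^*,\llam^*)$. Combining with the decomposition \eqref{eq:decompLag}--\eqref{eq:Fdef}, which gives $\phi_\A(\xx_\A)+F(\xx_\A)=\max_\llam\psi_0(\llam,\xx_\A)$, this value is precisely the optimal value of \eqref{pb:OPFc}; hence $\xx_\A^*$ is an optimal profile of LA variables and $\llam^*$ are optimal Lagrangian multipliers, i.e.\ the DLMPs. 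To recover the DSO variables, note that by \Cref{assp:exiSsolution} the centralized problem is feasible, so the instance $\pro_0(\xx_\A^*)$ of the DSO problem \eqref{pb:DSO-OPF} admits a solution $\xx_0^*$. Consequently, at the limit the feasibility test in \Cref{algo:dualDecompAltMin} succeeds and the DSO solves the untruncated problem; provided the bound satisfies $K>\norm{\llam^*}_\infty$, the truncation of \Cref{prop:boundedDual} is inactive and the returned multipliers coincide with $\llam^*$. The pair $(\xx_0^*,\xx_\A^*)$ then satisfies all constraints \eqref{cons:OPF}--\eqref{cons:localCons} and attains the optimal value, hence solves \eqref{pb:OPFc}.

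The step I expect to be the main obstacle is the saddle-point characterization of the limit, because the iterates are formed by Ces\`aro (running) averages rather than by the last computed point, so one cannot directly pass to the limit in the argmin/argmax. The rigorous argument averages the per-iteration optimality inequalities and uses convexity in $\xx_\A$ and concavity in $\llam$ (Jensen's inequality) to transfer the saddle-point inequalities to the averaged limit. A secondary subtlety is guaranteeing that the feasibility branch is eventually selected and the truncation deactivated, which rests on the feasibility of $\pro_0(\xx_\A^*)$ inherited from \Cref{assp:exiSsolution}.
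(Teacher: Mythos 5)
Your route differs from the paper's. The paper reads ``in case of convergence'' as: let $(\xx_\A^*,\llam^*)$ be a \emph{fixed point} of \Cref{algo:dualDecompAltMin}; it then observes that the fixed point satisfies the KKT conditions of the local problems \eqref{pb:operator_local}, that $(\xx_0^*,\llam^*)$ satisfies the KKT conditions of \eqref{pb:DSO-OPF}, and that the union of the two sets of conditions is exactly the KKT system of \eqref{pb:OPFc}; it also explicitly argues the ``decentralized'' part of the claim (separability of $\phi_\A$ and block-diagonality of $B$), which you never address. Your saddle-point-plus-strong-duality argument is instead essentially the machinery the paper deploys for \Cref{prop:algoValueGame} and \Cref{thm:GScvg}. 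The fixed-point reading has a concrete advantage you should note: at a fixed point the argmin/argmax relations hold \emph{exactly}, so the Ces\`aro-averaging obstacle you flag as the main difficulty simply does not arise; your averaging-plus-Jensen repair is workable (the linear dependence of $\psi_0$ on $\llam$ for fixed $\xx_\A$, and compactness of $\X_\A$, give the uniformity needed), but it is extra work that the intended statement does not require.

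There is, however, one genuine gap: your recovery of the DSO variables. You claim that \Cref{assp:exiSsolution} (feasibility of the centralized problem) implies that $\pro_0(\xx_\A^*)$ admits a solution $\xx_0^*$. This is a non sequitur: \Cref{assp:exiSsolution} asserts the existence of \emph{some} jointly feasible pair $(\xx_0,\xx_\A)$, not that the particular limit profile $\xx_\A^*$ admits any feasible network response. The paper stresses precisely the opposite --- that $\pro_0(\xx_\A)$ ``is not always feasible (depending on the value of $\xx_\A$)'' --- and this is the entire motivation for the truncated problem of \Cref{prop:boundedDual}. The correct argument, which is the one used in the proof of \Cref{prop:algoValueGame}, goes through the dual: once $(\xx_\A^*,\llam^*)$ is a saddle point with $\norm{\llam^*}_\infty < K$, concavity of $\psi_0(\cdot,\xx_\A^*)$ makes $\llam^*$ a global maximizer over $\rr^k$, so the dual of $\pro_0(\xx_\A^*)$ is \emph{bounded}; since under \Cref{assp:strongDuality} the dual is unbounded if and only if the primal is infeasible, boundedness of the dual yields feasibility of $\pro_0(\xx_\A^*)$, and hence existence of $\xx_0^*$ with $A_0\xx_0^* + B\xx_\A^* = \bm{b}$. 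With that substitution (and a sentence on decentralization), your proof is sound.
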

      \begin{proof}
        The algorithm is decentralized as $\phi_\A(\xx_\A)= \sum_{a\in\A}\phi_a(\xx_a)$ is separable and $B=diag(B_a)_{a\in\A}$ is block-diagonal, so that \Cref{algline:dualDecomp:agg} can be executed in a decentralized manner by each LA $a\in\A$, while the update of $\llam$ (\Cref{algline:dualDecomp:DSOlamcond} to \Cref{algline:dualDecomp:DSOlamend}) is executed independently by the DSO.
        
        Let $ \xx_\A^*, \llam^*$ be a fixed point of \Cref{algo:dualDecompAltMin}. Then $ (\xx_\A^*, \llam^*)$ satisfy the KKT conditions of \eqref{pb:operator_local}. Considering a primal solution $\xx_0^*$ for \eqref{pb:DSO-OPF} associated to $\llam^*$, then $(\xx_0^*,\llam^*)$ satisfies the KKT conditions of \eqref{pb:DSO-OPF}. The union of the two sets of conditions gives exactly the KKT conditions of  problem \eqref{pb:OPFc}, which shows that $ (\xx_0^*,\xx_\A^*, \llam^*) \in \sol \eqref{pb:OPFc}$.  
      \end{proof}
      
      There are two main advantages of \Cref{algo:dualDecompAltMin}:
      \begin{enumerate}
      \item we consider the actual problems on both sides:  LAs simply face the price incentives given by the DLMPs $\llam$ in their local optimization problem, while the DSO computes the network optimal power flow solution given the consumptions profiles on each node;
      \item the method ensures \emph{primal feasibility}: at the end of each iteration and as soon as \eqref{pb:DSO-OPF} is feasible, the DSO computes a feasible solution $\xx_0$ of $\pro_0(\xx_a^{(k+1)})$ satisfying $A\xx_0 + B\xx_\A^{(k+1)}=\bm{b}$. This is the main difference with Lagrangian methods such as ADMM where primal feasibility is only \emph{asymptotic} (\Cref{thm:admmCvg}).
        In practice, this is of main importance in our framework as the DSO needs a solution that is exactly feasible.
      \end{enumerate}
        
To help understand the convergence conditions of \Cref{algo:dualDecompAltMin}, it is relevant to consider a zero-sum game \cite{hofbauer2006best} interpretation:
\begin{proposition} \label{prop:algoValueGame}
        Consider the zero-sum game on $\psi_0$ where the first player minimizes $\psi_0$ on $\xx_\A \in \X_\A$, while the second player maximizes $\psi_0$ on $\llam \in \rr^k_K \eqd \{\llam \in \rr^k \ | \ \forall m , \ -K \leq \lambda_m \leq K\}$, for $K \geq 0$ large enough.
        Let $(\xx_\A^*,\llam^*)$  denote a saddle (equilibrium) point of this game.
        Then, there exists a primal solution $\xx_0^*$ of $\pro_0(\xx_a^*)$, and ($\xx_0^*, \xx_\A^*$)  defines a solution to central problem \eqref{pb:OPFc}.
      \end{proposition}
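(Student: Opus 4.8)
The plan is to exploit the convex--concave structure of $\psi_0$ established in \Cref{prop:cvxconcaveDual} and turn the saddle-point conditions into the KKT system of the centralized problem \eqref{pb:OPFc}, reusing the same mechanism as in the fixed-point argument above. Writing out the saddle-point inequalities, $(\xx_\A^*,\llam^*)$ satisfies
$$ \psi_0(\llam,\xx_\A^*) \leq \psi_0(\llam^*,\xx_\A^*) \leq \psi_0(\llam^*,\xx_\A) \quad \forall \llam \in \rr^k_K, \ \forall \xx_\A \in \X_\A , $$
so that $\llam^*$ maximizes $\psi_0(\cdot,\xx_\A^*)$ over the box $\rr^k_K$ while $\xx_\A^*$ minimizes $\psi_0(\llam^*,\cdot)$ over $\X_\A$. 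Recalling that $\psi_0(\cdot,\xx_\A^*)$ is, up to the additive constant $\phi_\A(\xx_\A^*)$, the dual function of the DSO problem \eqref{pb:DSO-OPF} evaluated at $\xx_\A^*$, the first maximization is exactly the dual of $\pro_0(\xx_\A^*)$.

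First I would recover a feasible DSO variable from the $\llam$-maximization. The key point is to argue that for $K$ large enough the maximizer $\llam^*$ lies in the interior of the box: invoking \Cref{assp:exiSsolution} and Slater's condition from \Cref{assp:strongDuality}, the optimal multiplier set of \eqref{pb:OPFc} is nonempty and bounded, so taking $K$ strictly above their common $\infty$-norm makes the box inactive and identifies the box-constrained game with the unconstrained one (cf.\ \Cref{prop:boundedDual}). With $\llam^*$ interior, stationarity $0\in\partial_\llam\psi_0(\llam^*,\xx_\A^*)$ holds, and by a Danskin-type argument this (super)differential contains the residual $A_0\xx_0^*+B\xx_\A^*-\bm{b}$ for any minimizer $\xx_0^*$ of $\phi_0(\xx_0)+\llam^{*\tr}A_0\xx_0$ over $\X_0$. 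Hence such an $\xx_0^*$ satisfies the coupling constraint $A_0\xx_0^*+B\xx_\A^*=\bm{b}$, is therefore feasible and optimal for $\pro_0(\xx_\A^*)$ with $\llam^*$ as associated multiplier, so $(\xx_0^*,\llam^*)$ verifies the KKT conditions of \eqref{pb:DSO-OPF}.

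Next I would treat the $\xx_\A$-minimization. Since the $\xx_0$-part of $\psi_0(\llam^*,\cdot)$ is constant in $\xx_\A$, the second saddle inequality states that $\xx_\A^*$ minimizes the separable convex map $\xx_\A\mapsto\phi_\A(\xx_\A)+\llam^{*\tr}B\xx_\A$ over $\X_\A$, i.e.\ $0\in\partial\phi_\A(\xx_\A^*)+B^\tr\llam^*+N_{\X_\A}(\xx_\A^*)$, which are precisely the KKT conditions of the LA subproblems. The union of these with the conditions of $\pro_0(\xx_\A^*)$, sharing the common multiplier $\llam^*$ on the coupling constraint, coincides with the full KKT system of the convex problem \eqref{pb:OPFc}, exactly as in the preceding proof; by convexity and \Cref{assp:strongDuality} these conditions are sufficient for optimality, whence $(\xx_0^*,\xx_\A^*)\in\sol\eqref{pb:OPFc}$.

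Equivalently, and perhaps more transparently, I could argue at the level of values: strong duality of the feasible $\pro_0(\xx_\A^*)$ gives $\max_{\llam}\psi_0(\llam,\xx_\A^*)=\phi_\A(\xx_\A^*)+F(\xx_\A^*)$, and the saddle inequalities then force $\phi_\A(\xx_\A)+F(\xx_\A)\geq\phi_\A(\xx_\A^*)+F(\xx_\A^*)$ for every $\xx_\A\in\X_\A$, so $\xx_\A^*$ minimizes $\phi_\A+F$ and is optimal for \eqref{pb:OPFc} through the decomposition \eqref{eq:decompLag}. I expect the delicate step to be the interiority/boundedness argument for $\llam^*$: recovering \emph{exact} primal feasibility of the coupling constraint hinges on the box being inactive at the saddle point, which is precisely why the statement demands $K$ large enough and why I would rely on the boundedness of the optimal multiplier set guaranteed by Slater's condition.
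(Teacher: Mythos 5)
Your proposal is correct in substance, but its main line of attack is genuinely different from the paper's. The paper argues purely at the level of optimal values: it takes a dual solution $\llam^*$ of $\pro_0(\xx_\A^*)$ (with $K$ assumed large enough that $\llam^*$ lies in the box $\rr^k_K$), picks an associated primal solution $\xx_0^*$ satisfying the coupling constraint exactly, and then shows through a chain of equalities that $\Phi(\xx_0^*,\xx_\A^*)$ equals the game value $\max_{\llam\in\rr^k_K}\min_{\xx\in\X}\L(\xx,\llam)$, swaps $\min$ and $\max$ via \Cref{assp:strongDuality}, and identifies the result with the constrained optimum of \eqref{pb:OPFc}; optimality follows because the candidate point attains that optimum. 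Your primary argument instead assembles KKT systems: interiority of $\llam^*$ gives stationarity of the dual function, a Danskin argument produces a minimizer $\xx_0^*$ with $A_0\xx_0^*+B\xx_\A^*=\bm{b}$ (note this is an existence statement, not one holding for every minimizer), and the two saddle inequalities yield the KKT conditions of \eqref{pb:DSO-OPF} and of the LA subproblems, whose union is the KKT system of \eqref{pb:OPFc} --- the same mechanism as the paper's proof of the preceding fixed-point proposition for \Cref{algo:dualDecompAltMin}. Your route is more informative, since it explains where exact primal feasibility comes from, a point the paper merely asserts (``we know that there is a solution $\xx_0^*$\dots''); your closing value-based variant is essentially the paper's own proof.

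One caveat on the step you yourself flag as delicate. You justify interiority of $\llam^*$ by taking $K$ above the norm of the optimal multipliers of the \emph{central} problem \eqref{pb:OPFc}; but $\llam^*$ is a box-constrained maximizer of the dual function of the \emph{subproblem} $\pro_0(\xx_\A^*)$, and identifying that subproblem's dual optimal set with the central multiplier set presupposes that $\xx_\A^*$ is already part of a central solution --- which is precisely what is being proved, so the argument as written is circular. A non-circular repair uses the other half of the saddle property: $\xx_\A^*$ solves $\min_{\xx_\A\in\X_\A}\max_{\llam\in\rr^k_K}\psi_0(\llam,\xx_\A)$, and since $\max_{\llam\in\rr^k_K}\llam^\tr(A_0\xx_0+B\xx_\A-\bm{b})=K\norm{A_0\xx_0+B\xx_\A-\bm{b}}_1$, this inner problem is an exact $\ell_1$-penalty reformulation of \eqref{pb:OPFc}; for $K$ strictly above the central multiplier norm (that set being nonempty and bounded under \Cref{assp:strongDuality} and \Cref{assp:exiSsolution}), every penalized solution is feasible, so $\xx_\A^*$ extends to a central solution, after which your stationarity, Danskin, and KKT assembly go through. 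The paper sidesteps the same difficulty by fiat (``suppose that we have chosen $K\geq 0$ such that $\llam^*\in\rr^k_K$''), so with this repair your proof is, if anything, the more complete of the two.
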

      \ifreferonline
      The proof is detailed in the online version \cite{paulinjacquot2020dlmp}.
      \else
      \begin{proof}
        Because of \Cref{prop:cvxconcaveDual}, we have convex-concave saddle function on convex and compact sets, thus the game has a value \cite{hofbauer2006best}. Then, the dual problem  $\max_{\llam \in \rr^k} \psi_0( \llam,\xx_\A^*)$ is bounded, and has a solution $\llam^*$.   Now suppose that we have chosen $K \geq 0$  such that $\llam^* \in \rr^K$. In that case, $\llam^*$ is a dual solution of both $\pro_0(\xx_a^*)$ and $\pro_0^K(\xx_a^*)$.
        We know that  there is a solution $\xx_0^*$ to the primal problem $\pro_0(\xx_a^*)$, associated to $\llam^*$, such that $A_0 \xx_0^* + B_\A \xx_\A^*- \bm{b} =0$. We have
        \begin{align}
          \Phi(\xx_0^*, &  \xx_\A^*) +0   = \L(\xx_0^*,  \xx_\A^*, \llam^*) \nonumber\\
           &= \max_{\llam \in \rr^k_K} \min_{\xx_\A\in\X_\A} \psi_0( \llam,\xx_\A) \nonumber\\
                      & = \max_{\llam \in \rr^k_K} \min_{\xx_0 \in \X_0,\xx_\A\in\X_\A}  \L(\xx_0,  \xx_\A, \llam) \label{eq:appliedStgDual} \\
                        &=  \min_{\xx_0 \in \X_0,\xx_\A\in\X_\A} \max_{\llam \in \rr^k_K} \L(\xx_0,  \xx_\A, \llam) \nonumber\\
           &=  \min_{ \substack{\xx_0 \in \X_0,\xx_\A\in\X_\A \\ A_0 \xx_0^* + B_\A \xx_\A^* = \bm{b}}} \Phi(\xx)                 \nonumber   
        \end{align}
        where \eqref{eq:appliedStgDual} follows from \Cref{assp:strongDuality}.
        \end{proof}
\fi

The interpretation based on a zero-sum game goes further, as \Cref{algo:dualDecompAltMin}  actually implements the so-called  \emph{best response} dynamics (BRD). Thus, we are able to show that the method converges, as given in \Cref{thm:GScvg} below.
\begin{theorem}
  \label{thm:GScvg}
  For $K\geq 0$ large enough, and under \Cref{assp:exiSsolution,assp:strongDuality}, the sequences $(\llam^{(k)})_k$ and $(\xx_\A^{(k)})_k$ generated by \Cref{algo:dualDecompAltMin} converge respectively to a dual and a (partial) primal solutions of central problem \eqref{pb:OPFc}.
\end{theorem}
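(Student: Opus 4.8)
The plan is to recognize Algorithm \ref{algo:dualDecompAltMin} as the \emph{best response dynamics} (BRD) for the zero-sum game on $\psi_0$ described in \Cref{prop:algoValueGame}, and then invoke known convergence results for BRD in two-player zero-sum games with convex-concave payoff. First I would set up the game precisely: the minimizing player controls $\xx_\A \in \X_\A$, the maximizing player controls $\llam \in \rr^k_K$, and the payoff is $\psi_0(\llam,\xx_\A)$, which by \Cref{prop:cvxconcaveDual} is convex in $\xx_\A$ and concave in $\llam$. Since $\X_\A$ is convex compact and $\rr^k_K$ is convex compact (this is exactly why we truncate with $K$, using \Cref{prop:boundedDual}), the game has a value and a saddle point exists. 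I would then check that the two update lines of the algorithm are genuinely best responses: Line \ref{algline:dualDecomp:agg} computes $\argmin_{\xx_a} \psi_0(\llam^{(k)},\xx_a)$ for each LA, which assembles into a best response $\xx_\A$ against the current $\llam^{(k)}$; and Lines \ref{algline:dualDecomp:DSOlamcond}--\ref{algline:dualDecomp:DSOlamend} compute $\arg\max_\llam \psi_0(\llam,\xx_\A^{(k+1)})$, a best response against the current primal profile (with the truncated problem $\pro^K_0$ handling the unbounded case).

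Next I would address the averaging. The crucial observation is that the algorithm does \emph{not} iterate the raw best responses but rather their Cesàro averages: $\xx_a^{(k+1)} = \tfrac{1}{k+1}(k\,\xx_a^{(k)} + \xx_a)$ and $\llam^{(k+1)} = \tfrac{1}{k+1}(k\,\llam^{(k)} + \llam)$. This is precisely the \emph{fictitious play} / time-averaged best response scheme. The plan is to appeal to the classical result of \cite{hofbauer2006best} (already cited in the setup of \Cref{prop:algoValueGame}) that continuous-time best response dynamics, and their time-averaged discrete counterpart, converge to the set of saddle points in any two-player zero-sum game with a convex-concave, continuous payoff on compact convex strategy sets. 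I would verify the hypotheses of that theorem: continuity of $\psi_0$ (it is a finite pointwise minimum of continuous affine functions of $\llam$ plus a convex continuous term in $\xx_\A$, hence continuous on the compact domain), convexity-concavity from \Cref{prop:cvxconcaveDual}, and compactness of both strategy sets. This yields convergence of $(\xx_\A^{(k)})_k$ and $(\llam^{(k)})_k$ to a saddle point $(\xx_\A^*,\llam^*)$.

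Finally I would close the loop to the \emph{central problem} by invoking \Cref{prop:algoValueGame}: a saddle point of the game yields a primal solution $\xx_0^*$ of $\pro_0(\xx_a^*)$ such that $(\xx_0^*,\xx_\A^*)$ solves \eqref{pb:OPFc}, and $\llam^*$ is the associated dual (the DLMPs). Hence the limit points are a dual and a partial primal solution of \eqref{pb:OPFc}, as claimed; here ``$K$ large enough'' means $K$ exceeds $\norm{\llam^*}_\infty$ for some optimal multiplier, which exists and is finite under \Cref{assp:exiSsolution,assp:strongDuality}.

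I expect the main obstacle to be the averaging/BRD convergence step rather than the game-theoretic setup. The delicate point is that fictitious play converges in \emph{time-averaged strategies} to the \emph{value} of the game, but convergence of the averaged iterates to an actual saddle \emph{point} (and not merely a reduction of the duality gap) requires the convex-concave structure and compactness to rule out cycling; one must be careful that it is the averaged pair $(\xx_\A^{(k)},\llam^{(k)})$, not the instantaneous best responses $(\xx_a,\llam)$, that converges. A secondary subtlety is the conditional branch in Lines \ref{algline:dualDecomp:DSOlamcond}--\ref{algline:dualDecomp:DSOlamend}: I would need to argue that once $K \geq \norm{\llam^*}_\infty$ is fixed, the truncated maximization $\pro^K_0$ and the exact maximization agree near the solution (via \Cref{prop:boundedDual}), so the algorithm is consistently playing the truncated game whose saddle point coincides with the unconstrained dual solution — ensuring the two code branches do not break the BRD interpretation.
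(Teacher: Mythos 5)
Your proposal is correct and follows essentially the same route as the paper: it identifies \Cref{algo:dualDecompAltMin} as discrete best-response dynamics with vanishing stepsizes (the Ces\`aro averaging) for the convex-concave zero-sum game on $\psi_0$ over the compact sets $\X_\A$ and $\rr^k_K$, invokes the convergence result of \cite[Prop.~7]{hofbauer2006best}, and concludes via \Cref{prop:algoValueGame}. Your additional care about the averaging versus instantaneous best responses, the choice of $K$, and the consistency of the two branches in Lines \ref{algline:dualDecomp:DSOlamcond}--\ref{algline:dualDecomp:DSOlamend} only makes explicit what the paper's terse proof leaves implicit.
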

\begin{proof}
  Because of \Cref{prop:cvxconcaveDual}, $\psi_0$ is  a convex-concave saddle function on convex and compact strategy sets $\X_0$ and $\rr^k_K$. Thus, we can apply the convergence of best-response in discrete vanishing stepsizes \cite[Prop.7]{hofbauer2006best} which ensures that the sequences converge to an equilibrium point.
We can conclude with \Cref{prop:algoValueGame}.
\end{proof}
In \Cref{sec:numericalStud}, we give a numerical example of  the convergence of \Cref{algo:dualDecompAltMin} and compare it to ADMM. One can observe that, in the example considered here, the convergence of PDGS is quite slow.

\section{Decentralized Coordination Mechanism}
\label{sec:mechanisms}

The decentralized coordination mechanism that we propose in this paper is the following:
\newcommand{\taxPen}{\tau^{\text{pen}}}
\newcommand{\paym}{\mathfrak{P}}
\makeatletter
\renewcommand{\ALG@name}{Procedure}
\makeatother
\begin{algorithm}[H]%
\begin{algorithmic}[1]
\State  \emph{computation step: } an optimization decomposition method is run to coordinate LAs and the DSO, during which each LA $a \in \A$ sends a sequence of profiles $(\xx_a^{(k)})_{k=1}^{K}$ until convergence to  decisions  $\xx_a^*$, associated to  Lagrangian multipliers (DLMPs) $\llam^*$ for the DSO,  corresponding to an optimal primal-dual solution $(\xx_0^*,\xx_\A^*,\llam^*)$ of \eqref{pb:formalComplete}. In particular, each aggregator agrees on realizing the announced profile $\xx^*_a$;
\State \emph{realization step: } each LA $a$  realizes the profile $\hat{\xx}_a$, which can be measured by the DSO. %
\State \emph{penalization step: }  if $\hat{\xx}_a \neq \xx^*_a$ for at least one $a \in \A$, then the DSO recomputes DLMPs as the dual solution $\hat{\llam}$ of problem $\pro_0(\hat{\xx}_\A)$, and charges each agent $a$ with payments $ \paym_a \eqd (\hat{\llam} \cdot \hat{\xx}_a)$ and penalize cheating LAs with a tax $\taxPen$.
 \end{algorithmic}
\caption{DLMP-based Coordination Procedure}
\label{proc:coordination}
\end{algorithm}

\makeatletter
\renewcommand{\ALG@name}{Algo.}
\makeatother

\subsection{Mechanism Design Discussion}

Because we rely on a decentralized decomposition method, the exchange of information is limited: in particular, aggregators do not have to send all their information $(E_n,\lbxx_n,\ubxx_n)_{n\in\N_a}$ as considered in some mechanisms as in \cite{samadi2012advanced}.
Yet, Procedure \ref{proc:coordination} describes a formal mechanism, with each agent (aggregator)  announcing the sequence  $(\xx_a^{(k)})_{k=1}^{K}$, then realizing $\hat{\xx}_a$ and being charged $ \paym_a \eqd \hat{\llam} \cdot \hat{\xx}_a$.

As  $(\xx_0^*,\xx_\A^*,\llam^*)$ constitutes an optimal solution of \eqref{pb:formalComplete}, it is \emph{optimal} for each LA $a$ to realize the profile $\xx^*_a$ under the incentives $\llam^*$.

\medskip

However,  aggregators are still required to communicate the sequence of consumption profiles $(\xx_a^{(k)})_{k=1}^{K}$. From the point of view of mechanism design \cite[Part II]{nisan2007algorithmic}, one can think that aggregators may be tempted to send profiles ${\tilde{\xx}_a}$ that do not comply with the update rule of the chosen decomposition algorithm, in order to manipulate the DLMPs $\llam^*$ computed in step 2.
With LA $a$ being \emph{truthful}, its final ``agreed'' profile $\xx^*_a$ should always satisfy (up to the convergence error):
\begin{align*}
 & \xx_a \in \argmin{\xx_a\in\X_a} \  \phi_a(\xx_a) + \llam^*_a B_a \xx_a \\
\text{ with } (\llam^*,\xx_0^*)  \text{ s.t. } & \xx_0^* \in \argmin{\xx_0 \in \X_0} \ \phi_0(\xx_0)  + \llam^* A \xx_0.
\end{align*}

Because the operator is able to verify (step 3) if the realized profile $\hat{\xx_a}$ corresponds to the agreed profile $\tilde{\xx}_a$, the operator can prevent deviations of aggregators on step 3 by imposing a large tax  $\taxPen$ on cheating aggregators.

\subsection{A DLMP-based coordination Procedure is not Incentive-Compatible}

Preventing the LAs to deviate in step 2  is not sufficient to ensure the truthfulness of the mechanism: a cheating aggregator could also respond untruthfully during the computation step with a sequence of profiles $(\tilde{\xx}_a^{(k)})_{k=1}^{K}$ with  $ \tilde{\xx}_a^{(K)} \eqd {\tilde{\xx}_a}$.
More precisely, if an LA want to cheat,  it can respond to each iteration DLMPs $\tilde{\llam}^{(k)}$ by acting with a different cost function $\tilde{\phi_a} \neq \phi_a$ and/or feasible set $\tilde{\X}_a \neq \X_a$ (and such that $\tilde{\X}_a \subset \X_a$ in order to converge to a feasible profile for the LA, that can be realized in step 2).

In that way, the algorithm would still converge to $\tilde{\llam}, \tilde{\xx_0} $ satisfying $$\tilde{\xx}_0 \in \argmin{\xx_0 \in \X_0} \ \phi_0(\xx_0)  + \tilde{\llam} A \xx_0.$$

If we assume that this cheating aggregator would not deviate from $ {\tilde{\xx}_a}$ in the realization step (because the tax  $\taxPen$ is large enough to discourage her), then the  aggregator would benefit from cheating if $ \phi_a(\tilde{\xx}_a) + \tilde{\llam}_a B_a \tilde{\xx}_a  < \phi_a(\xx^*_a) + \llam^*_a B_a \xx^*_a $.
The following counter-example shows an instance where this strict inequality holds, proving that, in general, the \emph{DLMP-based mechanism is not incentive compatible.}

\begin{example} \label{ex:counterexampleIncentive} Let us consider the toy example with only one node (one aggregator) linked to the root node 0 so that $\N= \{0,1 \}$, and two time periods $\T=\{0,1 \}$. Parameters and topology are given in \Cref{fig:toyCheating}.
  The operator has an energy cost of $$\phi_0(\pp_0) =(10 |p_{01}| + 10 p_{01}^2) +  (3 |p_{02}| + 2 p_{01}^2),$$
  while the LA has a \emph{preferred profile} $\pp_1^{\sharp}\eqd (1.5,1.5)$ defining the cost $$\phi_1(\pp_1)\eqd \omega \norm{\pp_1- \pp^\sharp_1}_2^2  \text{  where  } \omega \eqd 10. $$
  We assume that the LA has an\emph{ actual} upper bound on the admissible power on time period $t=1$ given by $\overline{P}_{11}=1.5$.
  If the LA announces its true upper bound of 1.5, she will end up with  a utility $\phi_a=  -33.57 $  and a total cost of  $\phi_a + \paym_a = -15,13$.
  When cheating and announcing a \emph{lower} upper bound $\tilde{\overline{P}}=1.0$, her cost will be $ \tilde{\phi}_a+ \tilde{\paym}_a= -32.49$ and total cost of $-15.47$, lower than the total cost when being truthfully.
The two solutions are illustrated on \Cref{fig:sol-PubAnnounced} below.
  \begin{figure}[h]
    \centering
    \hspace{-1cm}
  \begin{scriptsize}\begin{tikzpicture}[scale=1]
\node [draw,circle] (node0) at (0,0) {0} ;
\node [left= 0.5cm of node0] (res0) {\begin{minipage}{2.7cm}
$G_0= 0$ \\ 
$B_0= 0$ \\ 
$\bm{V}_{0}=1.0$ (fix)
\end{minipage}} ;
\node [draw,circle] (node1) at (0.0, -3.5) {1} ;
 \draw [black ,line width=2pt] (node0) -- (node1) node [pos=0.5,sloped,above,black] { $R,X=0.001,0.12$ } node [pos=0.5,sloped,below,black] {$S = 5$ } ; 
  
\node [left= 1 cm of node1, yshift=1cm] (res1) {\fbox{\begin{minipage}{4cm}
$\underline{\bm{P}}_{1},\overline{\bm{P}}_{1}=[0.3, 0.2],[1.5, 2.0]$ \\ 
$\underline{\bm{Q}}_{1}, \overline{\bm{Q}}_{1}=[-0.5, -0.5],[1, 1]$ \\ 
$\underline{{V}}_{1}, \overline{{V}}_{1}=0.7, 1.3$ \\ 
$G_{1}= 0$ \ , \ $B_{1}= 0.0011$ \\ 
$E_{1}= 1$ \ , \ $\eta_{1}= 0.3$ \\  
\end{minipage}} };
\end{tikzpicture}
\end{scriptsize}
\hspace{0.5cm}
\includegraphics[width=0.6\linewidth]{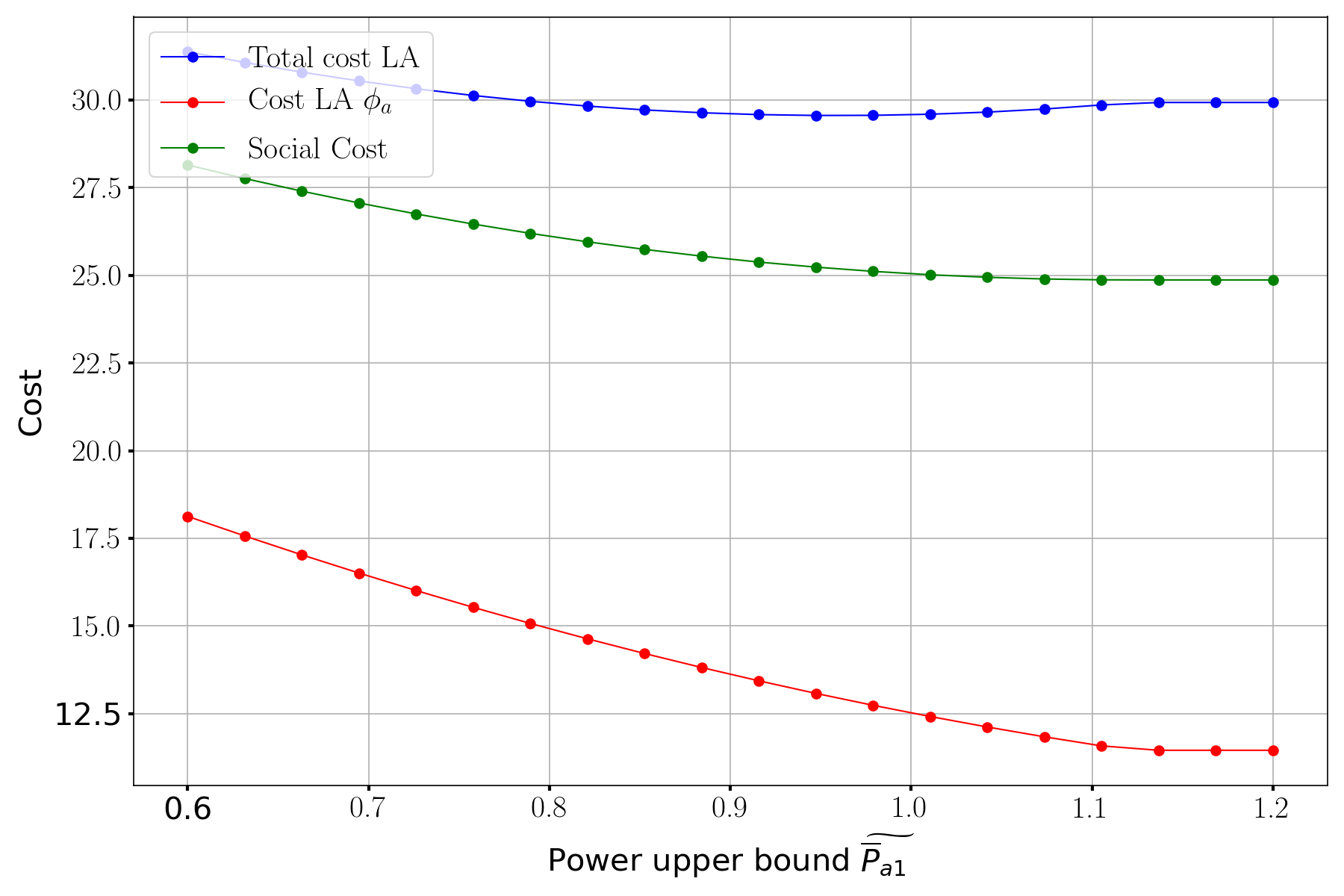} 
    \caption{A toy example where the aggregator benefits from cheating}
    \label{fig:toyCheating}
  \end{figure}

  \begin{figure}[h]
\begin{subfigure}{0.49\columnwidth} \centering
   \fbox{\begin{minipage}{\textwidth} \includegraphics[width=0.6\linewidth]{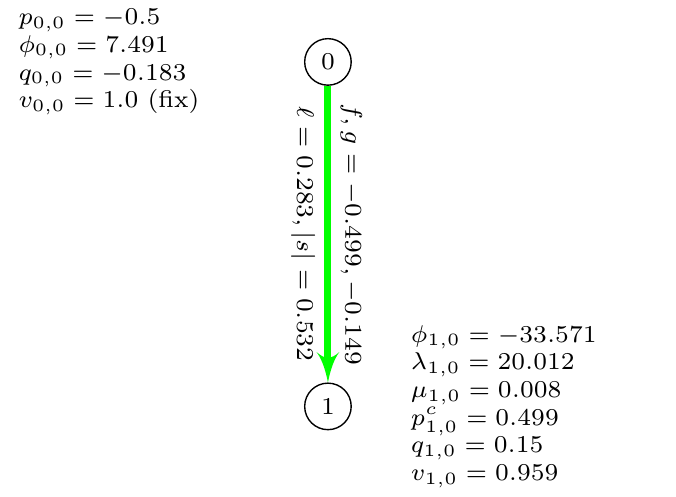}
 \hspace{-2cm}    \includegraphics[width=0.6\linewidth]{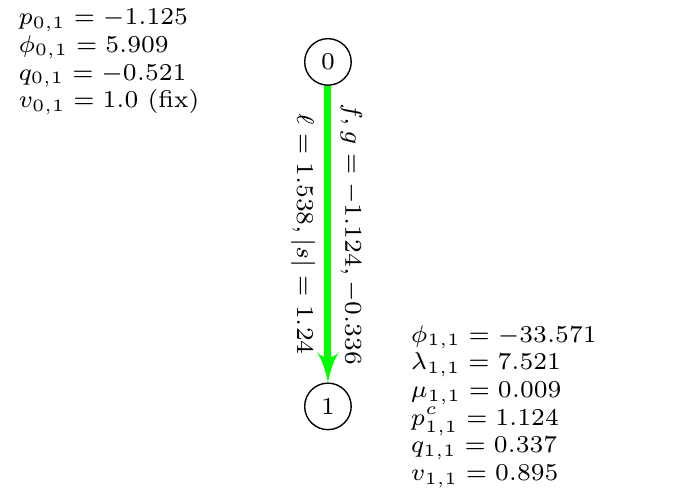}\end{minipage}}
 \subcaption{ $\tilde{\overline{P}}_{11}=1.5$}
\end{subfigure}
\begin{subfigure}{0.49\columnwidth} \centering
   \fbox{\begin{minipage}{\textwidth} 
       \includegraphics[width=0.6\linewidth]{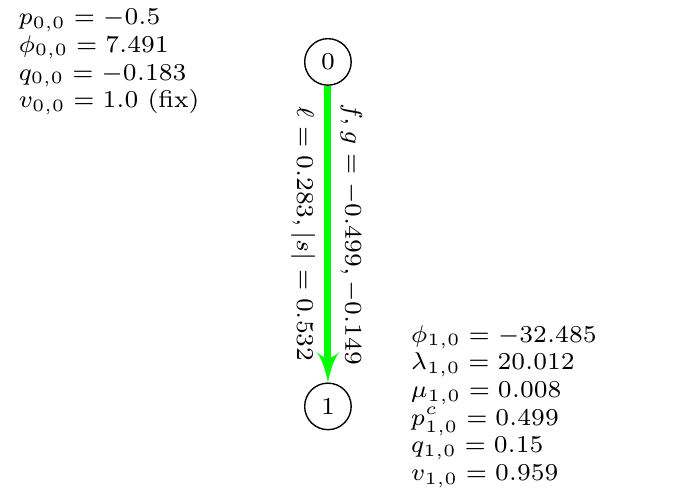}
 \hspace{-2cm}    \includegraphics[width=0.6\linewidth]{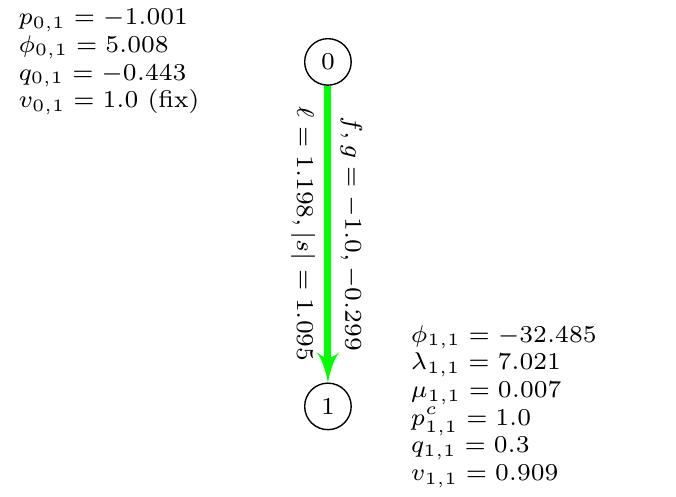} \end{minipage}}
\subcaption{ $\tilde{\overline{P}}_{11}=1.0$ }
\end{subfigure}
 \caption{Solution with Announced Power Upper bound : actual and cheated}
 \label{fig:sol-PubAnnounced}
  \end{figure}

\end{example}

  \medskip

  \subsection{Comparison with VCG mechanism}
\label{sec:vcg-meca}

A standard mechanism that achieves several desirable properties is the so-called Vickrey-Clarkes-Groves (VCG) mechanism \cite[Sec.~10.6]{shoham2008multiagent}.

The framework considered in this paper is slightly different from the standard framework considered in mechanism design, because we have an additional entity, the DSO, that is not considered as an agent in the mechanism, but whose cost $\phi_0(\xx_0)$ has to be considered in the objective.
Indeed, instead of minimizing the social cost $\sum_{a\in\A} \phi_a(\xx_a)$, we minimize $\Phi(\xx_0,\xx_\A)=\phi_0(\xx_0) + \sum_{a\in\A} \phi_a(\xx_a)$.
Because we just added the term $\phi_0(\xx_0)$ to the sum of agents' cost functions, the social choice function considered here is in the class of \emph{affine maximizers} \cite[Def.~10.5.4]{shoham2008multiagent}.
\newcommand{\info}{\mathbb{I}}
Let us denote by $\info_\A \eqd (\phi_a, \X_a)_{a\in\A} $ the information transmitted by agents to the DSO. The VCG mechanism would  compute:
\begin{align}
  & \xx(\info_\A) \eqd ( \xx_0^* , \xx_\A^*) \in \arg \min \{ \phi_0(\xx_0) + \sum_{a\in\A} \phi_a(\xx_a) \ | \   (\ref{cons:OPF}) \ \& \ \forall a,  \xx_a \in \X_a \} \\
  & \forall a\in\A,  \paym_a \eqd -  \tau_a^{c}(\info_{-a})    + \Big[ \phi_0\big(\xx_0(\info_\A)\big) + \sum_{a' \neq a } \phi_{a'}\big(\xx(\info_\A)\big) \Big] \ ,
  \end{align}
  where $\paym_a$ is the payment made by $a$ to the operator, $\info_{-a} \eqd (\phi_{a'}, \X_{a'})_{a' \neq a}$ and $\tau_a^{c}(\info_{-a})$ refers to the so-called Clarke tax:
  \begin{align} \label{eq:clarktax}
&     \tau_a^{c}(\info_{-a}) \eqd \phi_0\big(\xx_0(\info_{-a} ) \big)  + \sum_{a' \neq a } \phi_{a'}\big(\xx_{a'}(\info_{-a})\big) \\
    \text{ where } & \big(\xx_0(\info_{-a}),\xx_{-a}(\info_{-a}) \big) \in \arg \min_{\xx_0, \xx_{-a}}  \phi_0(\xx_0)  + \sum_{a' \neq a } \phi_{a'}(\xx_{a'}).
  \end{align}
  In our framework, the solutions $\big(\xx_0(\info_{-a}),\xx_{-a}(\info_{-a}) \big)$ solved the OPF problem without considering the loads of LA $a$, that is, having $\xx_a=0$, but conserving the same network structure, as the network variables are controlled by the DSO.
  In the toy example of \Cref{fig:toyCheating} above, the VCG payment of the LA $1$ is $\paym_1 = \phi_0(\xx_0^*)$, because there is only one LA in the network, and the DSO costs would be zero without it.
  
The standard VCG mechanism is not decentralized: a priori, each agent $a$ has to provide its complete information $\info_a \eqd (\phi_a, \X_a)$ to the DSO.
One can wonder if it is possible to reduce the amount of information provided, while keeping the structure of the VCG mechanism: indeed, using a decomposition algorithm, each agent $a$ would only have to provide:
\begin{itemize}
\item  the profile $\xx_a^*$ (more precisely a sequence of profile responses $(\xx_a^{*(s)})_s$ for an iterative algorithm as in \Cref{sec:coordMethods}, but the ultimate iteration, on which the algorithm converges, is the most important)
\item  the values $\phi_a\big(\xx(\info_\A) \big) $ and $\phi_a\big(\xx(\info_{\A \setminus \{a'\}})\big) $ for each $a'\neq a$, to compute payment $\pi_{a'}$ for each $a'$).
  \end{itemize}
Let us refer to this modified mechanism as Decentralized-VCG (DVCG). As the standard VCG, DVCG provides a mechanism that is efficient (in the sense that it provides an optimal solution of problem \eqref{pb:OPFc}), and incentive-compatible: 
  \begin{proposition}
    Under the VCG mechanism, responding optimally to the DSO and announcing the true profile $\xx_a^*$ and true values $\phi_a\big(\xx(\info_\A) \big) , \phi_a\big(\xx(\info_{\A \setminus \{a'\}})\big)$ is a dominant strategy for each aggregator $a$.
  \end{proposition}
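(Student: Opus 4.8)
The plan is to reproduce the classical dominant-strategy (Groves) argument for the VCG mechanism, adapted to the affine-maximizer objective $\Phi=\phi_0+\phi_\A$ used here. First I would fix an aggregator $a$ together with \emph{arbitrary} reports $\info_{-a}$ of the other aggregators, and consider an arbitrary report $\hat\info_a=(\hat\phi_a,\hat\X_a)$ by $a$. Letting $\hat\xx=(\hat\xx_0,\hat\xx_\A)$ be the allocation the mechanism computes from $(\hat\info_a,\info_{-a})$, the crucial first step is to substitute the VCG payment into $a$'s realized total cost $\phi_a(\hat\xx_a)+\paym_a$ — using the \emph{true} cost $\phi_a$ that $a$ actually incurs — and exhibit the Groves cancellation
\begin{equation*}
\phi_a(\hat\xx_a)+\paym_a
= \Big[\phi_0(\hat\xx_0)+\phi_a(\hat\xx_a)+\textstyle\sum_{a'\neq a}\phi_{a'}(\hat\xx_{a'})\Big]-\tau_a^{c}(\info_{-a})
= \Phi(\hat\xx)-\tau_a^{c}(\info_{-a}).
\end{equation*}
Here I would use that $a$ must realize a profile in its \emph{true} set $\X_a$ during the realization step (otherwise the tax $\taxPen$, taken large enough, makes the deviation strictly worse), so only reports with $\hat\xx_a\in\X_a$ are relevant.

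Next I would observe that the Clarke term $\tau_a^{c}(\info_{-a})$ depends only on $\info_{-a}$ and on the fixed network data, hence is a constant from $a$'s viewpoint. Minimizing $a$'s total cost is therefore equivalent to minimizing $\Phi(\hat\xx)$ over the allocations $a$ can induce. Since every such $\hat\xx$ satisfies the network constraints $(\ref{cons:OPF})$ and $\hat\xx_a\in\X_a$, $\hat\xx_{a'}\in\X_{a'}$, it is feasible for the true centralized problem, so $\Phi(\hat\xx)\geq\Phi(\xx^*)$ where $\xx^*=\xx(\info_\A)$ is the efficient allocation produced when $a$ responds optimally (i.e. reveals $(\phi_a,\X_a)$ through its best-response profiles). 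As truthful play induces exactly this minimizer, no deviation can lower $a$'s cost, which gives the weak dominance of announcing $\xx_a^*$.

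Finally I would dispose of the remaining DVCG-specific scalars $\phi_a(\xx(\info_\A))$ and $\phi_a(\xx(\info_{\A\setminus\{a'\}}))$: these enter only the payments $\paym_{a'}$ of the \emph{other} aggregators and never $a$'s own payment $\paym_a$ (which aggregates only $\phi_0$ and $\phi_{a'}$ for $a'\neq a$), so misreporting them cannot decrease $a$'s own cost and truthful announcement is again weakly dominant; together with the previous paragraph this yields the claim. I expect the main obstacle to be the feasibility bookkeeping of the second step: one must argue that the allocations reachable by \emph{feasible} misreports all lie in the true problem's feasible region — so that the truthful minimizer of $\Phi$ remains a valid lower bound — which is precisely what the realization step and the penalty $\taxPen$ are designed to enforce, and what distinguishes this affine-maximizer setting from the textbook VCG statement.
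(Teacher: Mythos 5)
Your proposal is correct and follows essentially the same route as the paper's own proof: the Groves cancellation $\phi_a(\hat{\xx}_a)+\paym_a=\Phi(\hat{\xx})-\tau_a^{c}(\info_{-a})$, the observation that the Clarke term is constant from $a$'s viewpoint so any feasible deviation gives $\Phi(\hat{\xx})\geq\Phi(\xx^*)$, and the separate remark that the announced scalars $\phi_a\big(\xx(\info_\A)\big),\phi_a\big(\xx(\info_{\A\setminus\{a'\}})\big)$ never enter $a$'s own payment. Your treatment is in fact slightly more careful than the paper's, since you make explicit the feasibility bookkeeping (that the penalty $\taxPen$ restricts relevant deviations to $\hat{\xx}_a\in\X_a$, which is what justifies the inequality $\Phi(\hat{\xx})\geq\Phi(\xx^*)$) — a point the paper's proof leaves implicit.
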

  \begin{proof}
    As the cost values $\phi_a\big(\xx(\info_\A) \big) , \phi_a\big(\xx(\info_{\A \setminus \{a'\}})\big)$ provided by $a$ do not intervene in the payment $\paym_a$, $a$ is indifferent to the values it provides, and has no interest in providing false values.
    Now imagine that $a$ would lie during the decomposition algorithm, such that the final profiles obtained after convergence are $\hat{\xx} \neq \xx^*$.
    Then the final cost of $a$ would be $\phi_a(\hat{\xx}_a) + \paym_a = \Phi(\hat{\xx}) -   \tau_a^{c}(\info_{-a}) \geq \Phi(\xx^*) - \tau_a^{c}(\info_{-a})$.
    Thus, $a$ is better off announcing the truth, making the decomposition algorithm converging to the optimal solution $\xx^*$.
  \end{proof}

  However, DVCG (and furthermore VCG) would be impractical to implement in practice.
  A first problem is that, even if each agent is not impacted by the payments made by other agents, it may not be indifferent to it: in practical cases, if we consider several aggregators that are competing, each of them may have interest to provide false information to make payments of competitors larger.
  A second disadvantage is that, to compute Clarke taxes  \eqref{eq:clarktax}, the operator will have to solve $|\A|$ additional decentralized problems.
  
\section{Numerical Example}
\label{sec:numericalStud}

\newif\iftoymodel
\toymodelfalse

\iftoymodel
\input{toymodel.tex}
\else
\fi

In this example, we consider the 15 buses network  proposed by Papavasiliou \cite{papavasiliou2017DLMP}, but with flexible active and reactive loads instead of fixed ones, and we consider a time set $\T \eqd\{0, 1\}$, of 2 time periods.
\ifreferonline
The parameters $(R_n,X_n,S_n,B_n,V_n)$ are those of \cite{papavasiliou2017DLMP}, while parameters  $(\bm{\uP}_n,\bm{\oP}_n,E_n,\bm{\uQ}_n,\bm{\oQ}_n,\tauC_n)_n$  are generated randomly based on the values \cite{papavasiliou2017DLMP}. We refer the reader to \cite{paulinjacquot2020dlmp} for details.
\else
The network structure can be observed on \Cref{fig:resFlows}.

For each bus $n$, the parameters $R_n,X_n,S_n,B_n$ are those of \cite{papavasiliou2017DLMP}, given in \Cref{tab:network14param}, while parameters $(\bm{\uP}_n,\bm{\oP}_n,E_n,\bm{\uQ}_n,\bm{\oQ}_n,\tauC_n)_n$  are generated as follows. For each  $t\in\T$, and with $\hat{p}_n^c, \hat{q}_n^c$ denoting the fixed active and reactive load values considered in \cite{papavasiliou2017DLMP}:
 \begin{itemize}[wide]
 \item $\uP\nt$ is chosen randomly as $\uP\nt \sim \mc{U}([0,\hat{p}_n^c])$, where $\mc{U}(I)$ denotes the uniform distribution on $I$;
 \item $\oP\nt$ is chosen randomly as $\oP\nt \sim \mc{U}([\hat{p}_n^c, 2 \hat{p}_n^c])$;
 \item $\uQ\nt, \oQ\nt$ are chosen similarly considering $\hat{q}_n^c$;
 \item $E_n$ is chosen as $E_n \sim \mc{U}([\sum_t \uP\nt, \sum_t \oP\nt])$;
   \item $\tauC_n$ is fixed as $\tauC_n \eqd \hat{q}_n^c / \hat{p}_n^c$.
   \end{itemize}
   In this example, we consider that LAs are indifferent between  consumption profiles as long as they are feasible, that is, $\phi_a=0$ for each $a\in\A$.
Following \cite{papavasiliou2017DLMP}, we consider that only the bus $11$ has a renewable production, with $ \upRE_{11,t} \sim \mc{U}([0,0.6])$ and $\rlbRE \eqd 0$ (the renewable production is fully active). The bounds $(\uV_n,\oV_n)$ are taken to 0.81 and $1.21$ for each  $n\in\N$, while $V_0=1.0$.

We consider the objective $ \Phi(\xx)= \phi_0(\xx_0)=  \sum_{t\in\T}c_t( \pRE_{0t}) $, with cost functions chosen as follows: time period 0 has an expensive price given by $c_0:p \mapsto p + p^2$, while time period 1 has a cheaper price given by  $c_1 : p \mapsto p$.

\begin{table}[H]
  \centering
 \begin{small}
 \begin{tabular}{c||c|c|c|c|c|c}
 $n$ & $R_n$ & $X_n$ & $S_n$ & $\mathbb{E}[p\nt]$ & $\mathbb{E}[q\nt]$ & $B_n \hh\cdot\hh 10^{3}$\\ \hline
0  & 0 & 0 & 0 & 0 & 0 & 0\\
1  & 0.001 & 0.12 & 2 & 0.7936 & 0.1855 & 1.1\\
2  & 0.0883 & 0.1262 & 0.256 & 0 & 0 & 2.8\\
3  & 0.1384 & 0.1978 & 0.256 & 0.0201 & 0.0084 & 2.4\\
4  & 0.0191 & 0.0273 & 0.256 & 0.0173 & 0.0043 & 0.4\\
5  & 0.0175 & 0.0251 & 0.256 & 0.0291 & 0.0073 & 0.8\\
6  & 0.0482 & 0.0689 & 0.256 & 0.0219 & 0.0055 & 0.6\\
7  & 0.0523 & 0.0747 & 0.256 & -0.1969 & 0.000 & 0.6\\
8  & 0.0407 & 0.0582 & 0.256 & 0.0235 & 0.0059 & 1.2\\
9  & 0.01 & 0.0143 & 0.256 & 0.0229 & 0.0142 & 0.4\\
10 & 0.0241 & 0.0345 & 0.256 & 0.0217 & 0.0065 & 0.4\\
11 & 0.0103 & 0.0148 & 0.256 & 0.0132 & 0.0033 & 0.1\\
12  & 0.001 & 0.12 & 1 & 0.6219 & 0.1291 & 0.1\\
13  & 0.1559 & 0.1119 & 0.204 & 0.0014 & 0.0008 & 0.2\\
14  & 0.0953 & 0.0684 & 0.204 & 0.0224 & 0.0083 & 0.1
 \end{tabular}
\end{small}
\caption{Parameters for the 15 buses network \cite{papavasiliou2017DLMP}}
\label{tab:network14param}
\end{table}

\fi

  \ifreferonline
The  optimal values of all variables and the optimal power flows are detailed in the online version \cite{paulinjacquot2020dlmp}.
  \else

  The SOCP problem \eqref{pb:OPFc} is solved with the CvxOpt Python library \cite{andersen2013cvxopt} in 0.53s on a laptop with a processor of 2.6GHz. The solutions obtained are detailed in \Cref{tab:resNum}, while \Cref{fig:resFlows} shows the active flows directions and the saturated lines.
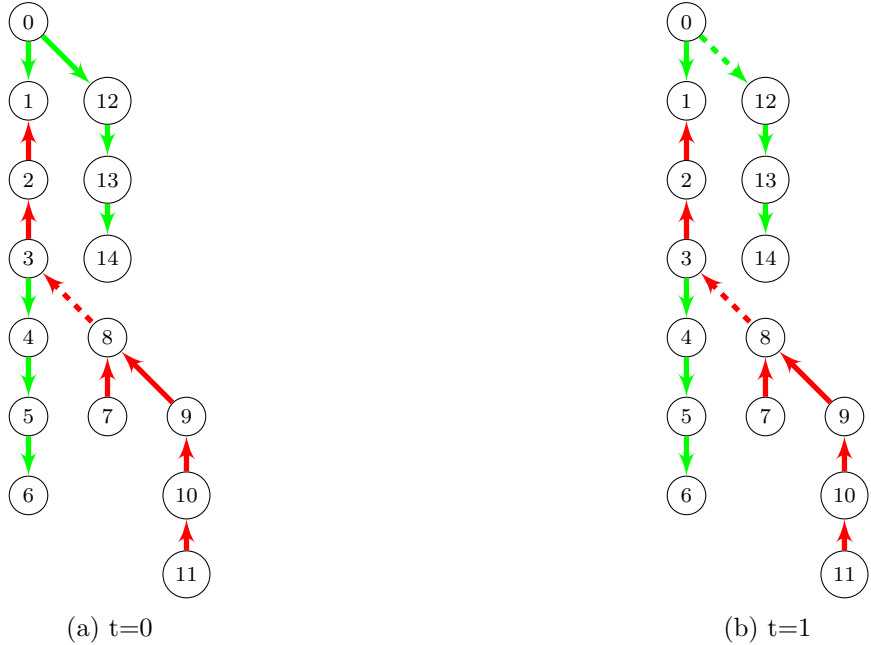
\begin{figure}[h]
\begin{subfigure}{0.49\columnwidth} \centering
 \begin{scriptsize}
\begin{tikzpicture}[scale=0.3]
\node [draw,circle] (node0) at (0,0) {0} ;
\node [draw,circle] (node1) at (0.0, -3.5) {1} ;
 \draw [-latex',green,line width=2pt] (node0) -- (node1); 
  
\node [draw,circle] (node2) at (0.0, -7.0) {2} ;
 \draw [latex'-, red,line width=2pt] (node1) -- (node2); 
  
\node [draw,circle] (node3) at (0.0, -10.5) {3} ;
 \draw [latex'-, red,line width=2pt] (node2) -- (node3); 
  
\node [draw,circle] (node4) at (0.0, -14.0) {4} ;
 \draw [-latex',green,line width=2pt] (node3) -- (node4); 
  
\node [draw,circle] (node5) at (0.0, -17.5) {5} ;
 \draw [-latex',green,line width=2pt] (node4) -- (node5); 
  
\node [draw,circle] (node6) at (0.0, -21.0) {6} ;
 \draw [-latex',green,line width=2pt] (node5) -- (node6); 
  
\node [draw,circle] (node8) at (3.5, -14.0) {8} ;
 \draw [latex'-, red, style = dashed ,line width=2pt] (node3) -- (node8); 
  
\node [draw,circle] (node7) at (3.5, -17.5) {7} ;
 \draw [latex'-, red,line width=2pt] (node8) -- (node7); 
  
\node [draw,circle] (node9) at (7.0, -17.5) {9} ;
 \draw [latex'-, red,line width=2pt] (node8) -- (node9); 
  
\node [draw,circle] (node10) at (7.0, -21.0) {10} ;
 \draw [latex'-, red,line width=2pt] (node9) -- (node10); 
  
\node [draw,circle] (node11) at (7.0, -24.5) {11} ;
 \draw [latex'-, red,line width=2pt] (node10) -- (node11); 
  
\node [draw,circle] (node12) at (3.5, -3.5) {12} ;
 \draw [-latex',green,line width=2pt] (node0) -- (node12); 
  
\node [draw,circle] (node13) at (3.5, -7.0) {13} ;
 \draw [-latex',green,line width=2pt] (node12) -- (node13); 
  
\node [draw,circle] (node14) at (3.5, -10.5) {14} ;
 \draw [-latex',green,line width=2pt] (node13) -- (node14); 
  
\end{tikzpicture}

\end{scriptsize}

 \subcaption{ t=0}
\end{subfigure}
\begin{subfigure}{0.49\columnwidth} \centering
\begin{scriptsize}
\begin{tikzpicture}[scale=0.3]
\node [draw,circle] (node0) at (0,0) {0} ;
\node [draw,circle] (node1) at (0.0, -3.5) {1} ;
 \draw [-latex',green,line width=2pt] (node0) -- (node1); 
  
\node [draw,circle] (node2) at (0.0, -7.0) {2} ;
 \draw [latex'-, red,line width=2pt] (node1) -- (node2); 
  
\node [draw,circle] (node3) at (0.0, -10.5) {3} ;
 \draw [latex'-, red,line width=2pt] (node2) -- (node3); 
  
\node [draw,circle] (node4) at (0.0, -14.0) {4} ;
 \draw [-latex',green,line width=2pt] (node3) -- (node4); 
  
\node [draw,circle] (node5) at (0.0, -17.5) {5} ;
 \draw [-latex',green,line width=2pt] (node4) -- (node5); 
  
\node [draw,circle] (node6) at (0.0, -21.0) {6} ;
 \draw [-latex',green,line width=2pt] (node5) -- (node6); 
  
\node [draw,circle] (node8) at (3.5, -14.0) {8} ;
 \draw [latex'-, red, style = dashed ,line width=2pt] (node3) -- (node8); 
  
\node [draw,circle] (node7) at (3.5, -17.5) {7} ;
 \draw [latex'-, red,line width=2pt] (node8) -- (node7); 
  
\node [draw,circle] (node9) at (7.0, -17.5) {9} ;
 \draw [latex'-, red,line width=2pt] (node8) -- (node9); 
  
\node [draw,circle] (node10) at (7.0, -21.0) {10} ;
 \draw [latex'-, red,line width=2pt] (node9) -- (node10); 
  
\node [draw,circle] (node11) at (7.0, -24.5) {11} ;
 \draw [latex'-, red,line width=2pt] (node10) -- (node11); 
  
\node [draw,circle] (node12) at (3.5, -3.5) {12} ;
 \draw [-latex',green, style = dashed ,line width=2pt] (node0) -- (node12); 
  
\node [draw,circle] (node13) at (3.5, -7.0) {13} ;
 \draw [-latex',green,line width=2pt] (node12) -- (node13); 
  
\node [draw,circle] (node14) at (3.5, -10.5) {14} ;
 \draw [-latex',green,line width=2pt] (node13) -- (node14); 
  
\end{tikzpicture}

\end{scriptsize}

\subcaption{ t=1 }
\end{subfigure}
 \caption{Directions of the active flows $\bm{f}$  at the optimal solution. \textit{Saturated lines are dashed.}}
 \label{fig:resFlows}
 \end{figure}

 \begin{table}[h]
   \centering
    \begin{scriptsize}
  \begin{tabular}{c|c|c|c|c|c|c|c|c}
$n$&  $\dlmp_{n,0}$ &  $\dlmq_{n,0}$ &  $p_{n,0}^c$ &  $q_{n,0}$ &  $f_{n,0}$ & $ g_{n,0}$ & $\ell_{n,0}$ &  $v_{n,0}$  \\\hline
0 & 2.12 & 0.0 & -0.56 & -0.249 & - & - & - & 1 \\
1 & 2.122 & 0.008 & 0.623 & 0.146 & -0.427 & -0.188 & 0.229 & 0.951 \\
2 & 2.049 & 0.029 & 0.0 & 0.0 & 0.199 & -0.038 & 0.042 & 0.975 \\
3 & 1.937 & 0.055 & 0.028 & 0.012 & 0.205 & -0.032 & 0.042 & 1.017 \\
4 & 1.939 & 0.055 & 0.005 & 0.001 & -0.019 & -0.003 & 0.0 & 1.016 \\
5 & 1.94 & 0.055 & 0.001 & 0.0 & -0.014 & -0.002 & 0.0 & 1.016 \\
6 & 1.942 & 0.055 & 0.013 & 0.003 & -0.013 & -0.003 & 0.0 & 1.014 \\
8 & 0.003 & 0.177 & 0.023 & 0.006 & 0.256 & -0.016 & 0.063 & 1.036 \\
7 & 0.0 & 0.177 & -0.131 & 0.0 & 0.131 & 0.001 & 0.016 & 1.049 \\
9 & 0.003 & 0.177 & 0.002 & 0.001 & 0.148 & -0.011 & 0.021 & 1.038 \\
10 & 0.001 & 0.177 & 0.014 & 0.004 & 0.151 & -0.009 & 0.022 & 1.045 \\
11 & 0.0 & 0.177 & 0.02 & 0.005 & 0.165 & -0.005 & 0.026 & 1.048 \\
12 & 2.12 & 0.0 & 0.107 & 0.022 & -0.132 & -0.032 & 0.019 & 0.992 \\
13 & 2.137 & 0.007 & 0.003 & 0.002 & -0.025 & -0.009 & 0.001 & 0.982 \\
14 & 2.146 & 0.01 & 0.022 & 0.008 & -0.022 & -0.008 & 0.001 & 0.977 \\
  \end{tabular}

  \vspace{0.2cm}
  \begin{tabular}{c|c|c|c|c|c|c|c|c}
$n$&  $\dlmp_{n,1}$ &  $\dlmq_{n,1}$ &  $p_{n,1}^c$ &  $q_{n,1}$ &  $f_{n,1}$ & $ g_{n,1}$ & $\ell_{n,1}$ &  $v_{n,1}$  \\\hline
0 & 1.0 & 0.0 & -1.299 & -0.549 & - & - & - & 1 \\
1 & 1.002 & 0.004 & 1.05 & 0.245 & -0.924 & -0.321 & 1.057 & 0.906 \\
2 & 0.979 & 0.021 & 0.0 & 0.0 & 0.127 & -0.074 & 0.024 & 0.909 \\
3 & 0.942 & 0.046 & 0.037 & 0.015 & 0.131 & -0.072 & 0.024 & 0.916 \\
4 & 0.945 & 0.047 & 0.027 & 0.007 & -0.083 & -0.019 & 0.008 & 0.911 \\
5 & 0.947 & 0.048 & 0.031 & 0.008 & -0.057 & -0.013 & 0.004 & 0.909 \\
6 & 0.95 & 0.048 & 0.026 & 0.006 & -0.026 & -0.006 & 0.001 & 0.905 \\
8 & 0.004 & 0.176 & 0.006 & 0.001 & 0.254 & -0.035 & 0.07 & 0.932 \\
7 & -0.001 & 0.176 & -0.143 & 0.0 & 0.143 & 0.001 & 0.022 & 0.947 \\
9 & 0.003 & 0.176 & 0.036 & 0.022 & 0.118 & -0.033 & 0.016 & 0.933 \\
10 & 0.001 & 0.176 & 0.015 & 0.004 & 0.154 & -0.011 & 0.025 & 0.94 \\
11 & 0.0 & 0.176 & 0.026 & 0.006 & 0.169 & -0.006 & 0.03 & 0.943 \\
12 & 2.008 & 0.272 & 0.342 & 0.071 & -0.374 & -0.083 & 0.15 & 0.977 \\
13 & 2.031 & 0.281 & 0.002 & 0.001 & -0.032 & -0.012 & 0.001 & 0.965 \\
14 & 2.044 & 0.286 & 0.03 & 0.011 & -0.03 & -0.011 & 0.001 & 0.957 
  \end{tabular}
\end{scriptsize}
\caption{Results of the OPF for the 15-bus network for time periods t=0 (\textit{above}) and t=1 (\textit{below}).}
\label{tab:resNum}
    \end{table}
    \fi
     The optimal production for node $11$ is $\pRE_{11,0}= 0.185 $ and  $\pRE_{11,1}= 0.194 $, while the optimal costs obtained are $ c_0(\pRE_{0,0})=0.873$ and $c_1(\pRE_{0,1})=1.299$.
    Several comments are to be made.%

    First, one can observe that, even if the example does not satisfy the theoretical assumptions of ~\Cref{thm:farivarSOCP} or \Cref{thm:ganSOCPexactV}, the SOCP relaxation is exact and gives the optimal solution of the original OPF problem.

Second, the solutions show that the active (and reactive) DLMPs obtained for each time period are closed to the DLMPs at the root node $(\bm{\dlmp}_0, \bm{\dlmq}_0)$, except in two cases:
\begin{itemize}[wide]
\item for the branch composed of nodes $8,7,9,10,11$, the active DLMPS are close to 0.0 on all time periods. This is due to the fact that the renewable production of node 11 at cost 0 and the negative load of node $7$, which together fully compensate the demand on this branch. 
The  energy left is exported to the remaining nodes of the network up to the saturation of the line $(3,8)$. This saturation creates an important difference of the DLMPs between one part of the network and the other. The DLMPs here give incentives for the nodes to either decrease production or increase consumption as much as possible, in order to release the saturation of line $(3,8)$.
\item On the contrary, the active DLMPs on the branch composed of nodes $(12,13,14)$ on time period $t=1$ are twice as large as the other nodes. Again, this is explained by the saturation of line $(0,12)$.
 Because of the cheaper price on time period $1$, LAs are encouraged to move the flexible demand onto this period, which results in the congestion of line $(0,12)$. Here, the DLMPs counterbalance this price difference  and give incentives to decrease consumption on $t=1$ for nodes $12,13,14$ in order to stay within the line capacity.
\end{itemize}

Third, the DLMP for node 7 and $t=1$ is strictly negative: at this time period, the (negative) consumption for this node is at its upper bound $p_{7,1}=\overline{P}_{7,1}=-0.143$.  The negative DLMP shows that the system will be better off if less power was injected by node 7. Interestingly, this case shows that, in some specific cases, DLMPs can be negative, even if we consider an increasing cost function $\phi_0$.

\begin{table}[H]
  \centering
  \begin{tabular}{|c|c|c|c|}
    Agg. & Nodes & DLMP payment & VCG payment \\\hline\hline
    1  &  [1, 2, 3]  &  2.464  &  2.04 \\\hline
2  &  [4, 5, 6, 12, 13]  &  0.693  &  0.675 \\\hline
3  &  [8, 7, 14]  &  0.077  &  -0.007 \\\hline
4  &  [9, 10]  &  0.006  &  0.004 \\\hline
5  &  [11]  &  0.002  &  -0.115 \\\hline
  \end{tabular}
  \caption{Comparison of DLMP payments and VCG payments}
  \label{tab:dlmpVSvcg}
\end{table}
\Cref{tab:dlmpVSvcg} shows the difference between the DLMP payments of each aggregator $a$, and its VCG payments, given in \Cref{sec:vcg-meca}. It is interesting to observe that DLMP payments remain close to VCG payments. We also observe on this example that the DLMP payment of each aggregator is always larger than the VCG payment.
This inequality was proved to hold systematically in \cite{samadi2012advanced} in a much simpler framework (without network constraints and power upper bounds). Extending it in the framework of this paper would be an interesting avenue.

\begin{figure}[H]
\begin{subfigure}{0.49\columnwidth} \centering
 \includegraphics[width=\textwidth]{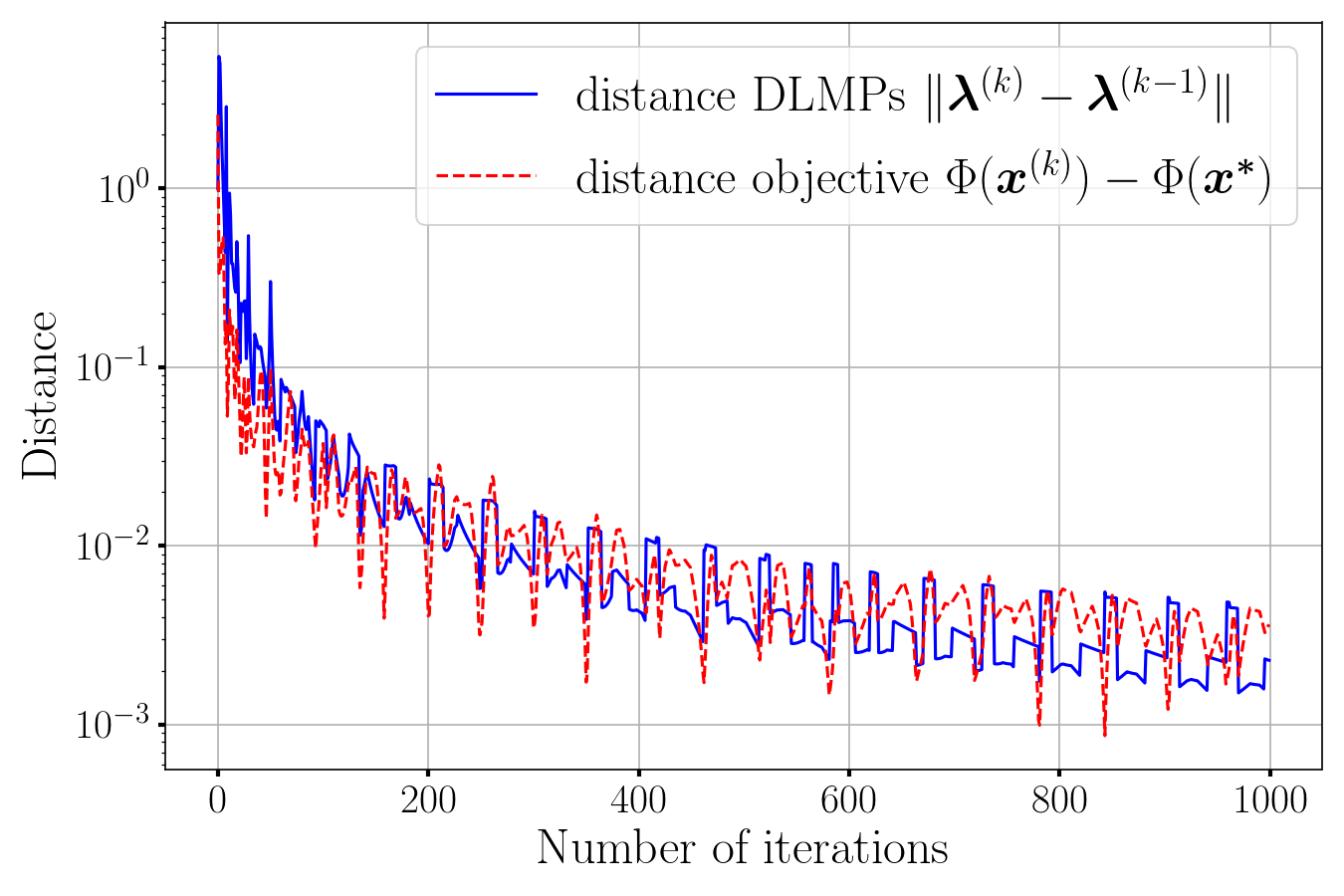}
 \subcaption{ PDGS ($K=4$)}
\end{subfigure}
\begin{subfigure}{0.49\columnwidth} \centering
\includegraphics[width=\textwidth]{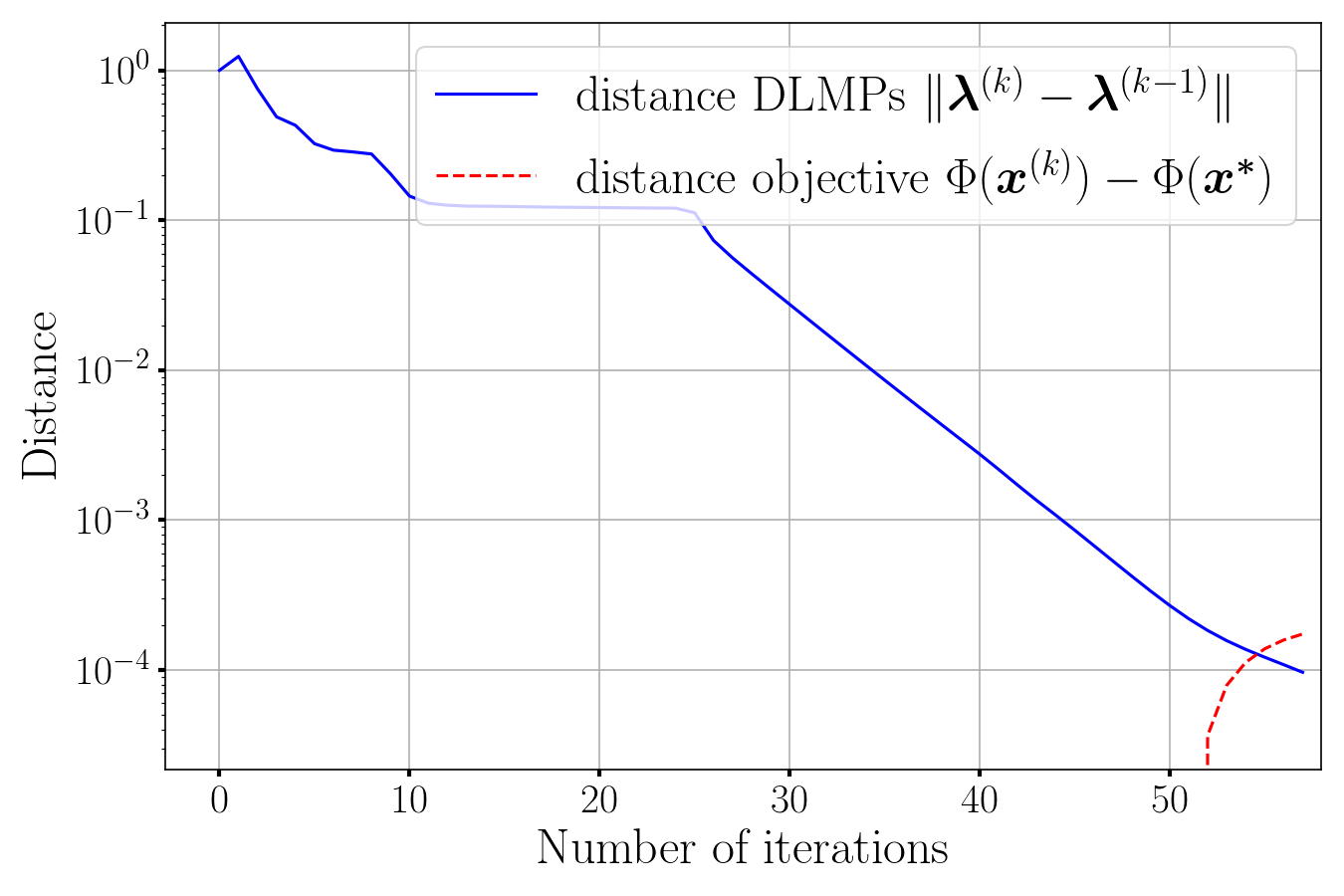}
\subcaption{ ADMM ($\rho=5$)}
\end{subfigure}
 \caption{Convergence of PDGS (\Cref{algo:dualDecompAltMin}) and ADMM (\Cref{algo:dualAscent}). \emph{In spite of other advantages, PDGS is very slow to converge after a few iterations.}}
 \label{fig:algosCvg}
\end{figure}

In \Cref{fig:algosCvg}, we compare the convergence of  PDGS (\Cref{algo:dualDecompAltMin}) and ADMM (\Cref{algo:dualAscent}).
For PDGS in this example, infeasibility of the DSO subproblem (see \Cref{algo:dualDecompAltMin}) only appears on iterations 1,2,12 and, ultimately, 19: the DSO subproblem is always feasible afterwards.
PDGS is very slow to converge: as opposed to ADMM which needed 60 iterations (executed in 21.41 seconds in our configuration) to converge with an error of less than $10^{-4}$ on primal feasibility (which is equal to the primal feasibility error $\rho( A_0\xx_0^{(k+1)} + B\xx_{\A}^{(k+1)}\hh -\bm{b})$)  and objective, PDGS is still more than $10^{-3}$ away from the optimal objective cost after 1000 iterations (executed in 364.23 seconds in our configuration).
Although a parallel implementation could save execution time (it is possible to solve all LAs' problems in parallel, and to solve all time periods problems in parallel in the DSO's problem), it could be necessary to rely on ADMM rather than on PDGS in practice, for a larger network and a larger number of time periods.

A possibility for the DSO could be to rely on the two algorithms:  considering the last solutions of the LAs for some iterations of ADMM, and then applying PDGS until reaching primal feasibility.

Improving the convergence of PDGS, for instance by adding a regularizing term in the subproblems, could also be an avenue for further research.

    \section*{Conclusion}

    We proposed a coordination procedure for aggregators operating on a distribution network, which respects the decentralized structure of decisions and information.
    This procedure enables to compute decentralized decisions  satisfying AC network constraints and  leading to  an optimal grid operation from the system operator point of view.

    Several directions of research are interesting for extending this work.
    One could study the tractability and the limits of the proposed procedure for large-scale instances, considering  networks with a large number of nodes and/or a large set of time periods.   

    Second, an interesting but complex extension  to the proposed model would be to consider a strategic and game-theoretic framework in which  aggregators are subject to the DSO incentives, but are also price-makers on an electricity market.

    \ifreferonline
    \else
        \section*{Acknowledgments}
    The author sincerely thanks St\'ephane Gaubert, Nadia Oudjane, Olivier Beaude and Alejandro Jofr\'e for precious discussions and insightful comments.

    \fi

\renewcommand{\bibfont}{\small}
\bibliographystyle{myIEEEtranN}
\bibliography{../../../USEFULPAPERS/Biblio_complete/shortJournalNames,../../../USEFULPAPERS/Biblio_complete/biblio1,../../../USEFULPAPERS/Biblio_complete/biblio2,../../../USEFULPAPERS/Biblio_complete/biblio3,../../../USEFULPAPERS/Biblio_complete/biblio4,../../../USEFULPAPERS/Biblio_complete/biblioBooks}

\newcommand{\mum}{\underline{\mu}}
\newcommand{\mup}{\overline{\mu}}

\end{document}